\newtheorem{thm}{Theorem}
\newtheorem{lemma}{Lemma}
\newtheorem{cor}{Corollary}
\newtheorem{prop}{Proposition}
\theoremstyle{definition}
\newtheorem{definition}{Definition}
\newtheorem{rem}{Remark}
\newtheorem{example}{Example}
\def\index#1{}
\begin{document}

\begin{frontmatter}
\pretitle{Research Article}
\title{The laws of iterated and triple logarithms for extreme values of regenerative processes}

\author{\inits{A.V.}\fnms{Alexander}~\snm{Marynych}\thanksref{cor1}\ead[label=e1]{marynych@unicyb.kiev.ua}}
\author{\inits{I.K.}\fnms{Ivan}~\snm{Matsak}\ead[label=e2]{ivanmatsak@univ.kiev.ua}}
\thankstext[type=corresp,id=cor1]{Corresponding author.}
\address{\institution{Taras Shevchenko National University of Kyiv},\break
Faculty of Computer Science and Cybernetics, 01601 Kyiv, \cny{Ukraine}}
%



\markboth{A.V. Marynych and I.K. Matsak}{Extreme values of regenerative processes}

\begin{abstract}
We analyze almost sure asymptotic behavior of extreme values of a regenerative
process. We show that under certain conditions a properly centered and
normalized running maximum of a regenerative process satisfies a law of
the iterated logarithm for the $\limsup $ and a law of the triple logarithm
for the $\liminf $. This complements a previously known result of Glasserman
and Kou [Ann. Appl. Probab. 5(2) (1995), 424--445]. We apply our results
to several queuing systems and a birth and death process.
\end{abstract}

\begin{keywords}
\kwd{Extreme values}
\kwd{regenerative processes}
\kwd{queuing systems}
\end{keywords}

\begin{keywords}[MSC2010]%
\kwd{60G70}
\kwd{60F15}
\kwd{60K25}
\end{keywords}

\received{\sday{11} \smonth{1} \syear{2020}}
\revised{\sday{1} \smonth{2} \syear{2020}}
\accepted{\sday{1} \smonth{2} \syear{2020}}
\publishedonline{\sday{17} \smonth{2} \syear{2020}}

\end{frontmatter}

\section{Introduction and main results}
\label{sec1}

Various problems related to asymptotic behavior of extreme values of regenerative
processes\index{regenerative processes} is of considerable practical interest and has attracted a lot
of attention in probabilistic community. For example, extremes in queuing
systems\index{extremes in queuing systems} and of birth and death processes have been investigated in~\cite{ande,asm,coh,igl,serf},
to name but a few. Analysis carried out in the above papers is mostly based
on the classical theory of extreme values for independent identically distributed
(i.i.d.) random variables. A survey of early results in this direction
can be found, among other, in paper~\cite{asm}. In recent paper~\cite{ok_ik}
a slightly different approach to the asymptotic analysis of extreme values
of regenerative processes\index{regenerative processes} using a nonlinear time transformations has been
proposed.

The aforementioned works were mostly aimed at the derivation of \emph{weak}
limit theorems for extremes of regenerative processes.\index{regenerative processes extremes} In this article
instead, we are interested in almost sure (a.s.) behavior of general
regenerative processes\index{regenerative processes} and in particular of regenerative processes\index{regenerative processes} appearing
in queuing and birth--death systems. Our main results formulated in
Theorems~\ref{thm:main1} and \ref{thm:main2} below provide the laws of
iterated and triple logarithms for the running maximum of regenerative
processes.\index{regenerative processes} A distinguishing feature of our results is a different scaling
required for $\limsup $ and $\liminf $. Under the assumption that the right
tail of the maximum of a regenerative process\index{regenerative processes} over its regeneration cycle has
an exponential tail, this type of behavior has already been observed in
\cite{Glass}, see Proposition~3.2 therein. Our theorems provide a
generalization of the aforementioned result and cover, for example,
regenerative processes\index{regenerative processes} with Weibull-like tails of the maximum over a
regeneration cycle. As in many other papers dealing with extremes of
regenerative processes,\index{regenerative processes extremes} our approach relies on analyzing the a.s. behavior of
the running maximum of i.i.d. random variables. In this respect, let us also
mention papers \cite{Klass1984,Klass1985,Robbins+Siegmund:1972} dealing with
a.s.~growth rate of the running maximum, see Section 3.5 in
\cite{Embrechts_et_al:2013} for a survey.

Before formulating the results we introduce necessary definitions. Let us
recall, see \cite{BGT}, that a positive measurable function $U$ defined in
some neighbourhood of $+\infty $ is called regularly varying at $+\infty $
with index $\kappa \in \mathbb{R}$ if $U(x)=x^{\kappa } V(x)$, and the
function $V$ is slowly varying at $+\infty $, that is
\begin{equation*}
\lim _{t\to +\infty } \frac{V(tx)}{V(t)} = 1\quad \text{for all } x>0.
\end{equation*}

Given a function $H:\mathbb{R}\to \mathbb{R}$ we denote by $H^{-1}$ its
generalized inverse defined by
%
\begin{equation}\label{eq:gen_inverse_def}
H^{-1}(y)=\inf \left \{  x\in \mathbb{R}: H(x)>y\right \}  ,\quad y\in
\mathbb{R}.
\end{equation}

The following definition is of crucial importance for our main results.

\begin{definition}
We say that a function $H:\mathbb{R}\to \mathbb{R}$ satisfies condition
$(\mathbb{U})$ if the following holds:
\begin{enumerate}
\item $\lim _{x\to +\infty }H(x)=+\infty $;
\item the function $H$ is eventually nondecreasing and differentiable;
\item the derivative $h(x):=H'(x)$ is such that for some
$\kappa \in \mathbb{R}$ the function
\begin{equation*}
\hat{h}(x) = (H^{-1}(x))'=\frac{1}{h(H^{-1}(x))},\quad x\in \mathbb{R},
\end{equation*}
is regularly varying at $+\infty $ with index $\kappa $.
\end{enumerate}
\end{definition}

Note that the assumption of regular variation of $\hat{h}$ implies that $h$
is eventually positive. Thus, $H$ is eventually strictly increasing and the
generalized inverse $H^{-1}$ defined by \eqref{eq:gen_inverse_def} eventually
coincides with the usual inverse.

Let $X=(X(t))_{t\geq 0}$ be a regenerative random process,\index{regenerative random process} that is,
\begin{equation*}
X(t) = \xi _{k} (t- S_{k-1}) \quad \mbox{for } t \in [S_{k-1} , S_{k}), \
k\in \mathbb{N},
\end{equation*}
where
\begin{equation*}
S_{0}=0,\qquad S_{k} = T_{1} + \cdots + T_{k},\quad k \in \mathbb{N},
\end{equation*}
and $(T_{k} , \xi _{k} (\cdot ))_{k\in \mathbb{N}}$ is a sequence of
independent copies of a pair $(T , \xi (\cdot ))$, see, for example,
\cite[Part II, Chapter 2]{ks} and \cite[Chapter 11, \S 8]{fe}. The points
$(S_{k})$ are called regeneration epochs and the interval $[S_{k-1} , S_{k})$
is the $k$-th period of regeneration.\index{regeneration}

For $t\geq 0$, put
\begin{equation*}
\bar{X}(t) = \sup _{0\leq s < t} X(s),
\end{equation*}
and note that $\bar{X}(T_{1})$ is the maximum of the process $X$ on the first
period of regeneration.\index{regeneration} Let $F$ be the distribution function of
$\bar{X}(T_{1})$, that is,
\begin{equation*}
F(x):= \mathbf{P}(\bar{X}(T_{1})\leq x).
\end{equation*}
Put
\begin{equation*}
R(x):=-\log (1-F(x)),\quad x\in \mathbb{R},
\end{equation*}
and
\begin{equation*}
\alpha _{T} = \mathbf{E}T_{1} = \mathbf{E}T.
\end{equation*}
Note also that it is always possible to write a decomposition
%
\begin{equation}\label{f1}
R(x)= R_{0} (x)+ R_{1} (x),\quad x\in \mathbb{R},
\end{equation}
where
%
\begin{equation}\label{f150}
|R_{1} (x)| \leq C_{1} < \infty ,\quad x\in \mathbb{R}.
\end{equation}
Here and hereafter we denote by $C, C_{1} , C_{2} $ etc. some positive
constants which may vary from place to place and may depend on parameters of
the process $X(\cdot )$.

We are ready to formulate our first result.

\begin{thm}\label{thm:main1}
Let $(X(t))_{t\geq 0}$ be a regenerative random process.\index{regenerative random process} Assume that there
exists a decomposition \eqref{f1} such that \eqref{f150} holds and the
function $R_{0}$ satisfies condition $(\mathbb{U})$. Suppose further that
$\alpha _{T} < \infty $. For large enough $x\in \mathbb{R}$, let $r_{0}$ be
the derivative of $R_{0}$. Then
%
\begin{equation}\label{f2}
\limsup _{t\rightarrow \infty } \frac{ r_{0} (A_{0} (t)) (\bar{X}(t)
-A_{0}(t))}{L_{2} (t)}=1 \quad \textrm{a.s.},
\end{equation}
and
%
\begin{equation}\label{f3}
\liminf _{t\rightarrow \infty } \frac{ r_{0} (A_{0} (t)) (\bar{X}(t) -A_{0}
(t))}{L_{3} (t) }=-1 \quad \textrm{a.s.},
\end{equation}
where
\begin{equation*}
A_{0}{(t)}=R_{0}^{-1}\left (\log {\frac{t}{\alpha _{T}}}\right ), \qquad L_{2}
(t) = \log \log t,\qquad L_{3} (t) = \log \log \log t .
\end{equation*}
\end{thm}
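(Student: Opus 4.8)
The plan is to reduce the two statements to an almost sure two-sided envelope for the running maximum of an i.i.d.\ sequence, and then transfer the discrete result back to continuous time. First I would introduce the cycle maxima $\zeta _k:=\sup _{S_{k-1}\le s<S_k}X(s)$, which by the regenerative structure are i.i.d.\ with distribution function $F$, so that $\mathbf P(\zeta _1>x)=e^{-R(x)}$. Writing $M_n:=\max _{1\le k\le n}\zeta _k$ and letting $N(t):=\max \{k:S_k\le t\}$ be the associated renewal counting process, one has $\bar X(S_n)=M_n$, whence the monotonicity of the running maximum gives the sandwich $M_{N(t)}\le \bar X(t)\le M_{N(t)+1}$. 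Since $\alpha _T<\infty $, the strong law for renewal processes yields $N(t)/t\to 1/\alpha _T$ a.s., and in particular $\log N(t)=\log (t/\alpha _T)+o(1)$ a.s. Thus it suffices to analyse the a.s.\ fluctuations of $M_n$ about $a_n:=R_0^{-1}(\log n)$ and afterwards replace $n$ by $t/\alpha _T$; the single-index shift from $N(t)$ to $N(t)+1$ is harmless because the normalising quantities vary slowly. The needed facts about i.i.d.\ maxima are the classical ones, cf.\ \cite{Klass1984,Robbins+Siegmund:1972,Embrechts_et_al:2013}.

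For the $\limsup $ in \eqref{f2} I would run a direct Borel--Cantelli argument at the boundary level $R_0(v_n)=\log n+(1\pm \varepsilon )\log \log n$, i.e.\ $v_n^{\pm }=a_n+(1\pm \varepsilon )L_2(n)/r_0(a_n)$ up to a first-order Taylor correction of $R_0$. Using $|R_1|\le C_1$ (from \eqref{f150}) and the mean value theorem one gets that $e^{-R(v_n^{\pm })}$ equals $n^{-1}(\log n)^{-(1\pm \varepsilon )}$ up to a factor bounded above and below, so $\sum _n e^{-R(v_n^{+})}<\infty $ while $\sum _n e^{-R(v_n^{-})}=\infty $. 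The convergent series, applied to the events $\{\zeta _n>v_n^{+}\}$ together with the monotonicity of $M_n$ and of $v_n^{+}$, forces $M_n\le v_n^{+}$ eventually; the divergent series, applied to the \emph{independent} events $\{\zeta _n>v_n^{-}\}$ via the second Borel--Cantelli lemma, forces $\zeta _n>v_n^{-}$ and hence $M_n\ge \zeta _n>v_n^{-}$ infinitely often. Letting $\varepsilon \downarrow 0$ gives \eqref{f2}.

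The $\liminf $ in \eqref{f3} is the delicate part, and is where the triple logarithm appears. Here I would sample along a geometric subsequence $n_k:=\lfloor \theta ^{k}\rfloor $, $\theta >1$, because the events $\{M_n\le w_n\}$ are strongly positively dependent (the running maximum remembers its entire past), so that summing over every $n$ is wasteful and only a sparse subsequence reflects the true rate. Using $\mathbf P(M_n\le w)=F(w)^{n}\approx \exp (-n\,e^{-R(w)})$, the first Borel--Cantelli boundary \emph{along} $n_k$ sits at the $L_3$ scale $R_0(w_{n_k})=\log n_k-\lambda \log \log \log n_k$, the critical value being $\lambda =1$. For the bound $\liminf \ge -1$ I would take $w_n=a_n-(1+\varepsilon )L_3(n)/r_0(a_n)$ and show $\sum _k\mathbf P(M_{n_k}\le w_{n_{k+1}})<\infty $; the first Borel--Cantelli lemma then gives $M_{n_k}>w_{n_{k+1}}$ eventually, and the monotonicity of both $M$ and $w$ interpolates this to all intermediate $n$. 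For the matching bound $\liminf \le -1$ I would take $\tilde w_n=a_n-(1-\varepsilon )L_3(n)/r_0(a_n)$ and prove $M_{n_k}\le \tilde w_{n_k}$ infinitely often; since here $\sum _k\mathbf P(M_{n_k}\le \tilde w_{n_k})=\infty $ but the events are dependent, I would pass to the independent block maxima $V_k:=\max _{n_{k-1}<i\le n_k}\zeta _i$, use the factorisation $\{M_{n_j}\le \tilde w_{n_j}\}\cap \{M_{n_k}\le \tilde w_{n_k}\}=\{M_{n_j}\le \tilde w_{n_j}\}\cap \{\max _{n_j<i\le n_k}\zeta _i\le \tilde w_{n_k}\}$ to estimate the pairwise correlations, and conclude via a Kochen--Stone (second-moment) argument. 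This correlation control, together with the fact that it is precisely the geometric subsampling that converts the double logarithm of the naive Borel--Cantelli boundary into a triple logarithm, is the main obstacle.

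Finally I would transfer the discrete envelopes to continuous time. Condition $(\mathbb U)$, namely the regular variation of $\hat h=(R_0^{-1})'$ with index $\kappa $, enters here through Potter bounds and the uniform convergence theorem for regularly varying functions. It justifies the first-order Taylor expansions of $R_0$ used above (so that $r_0(\xi _n)/r_0(a_n)\to 1$ on the relevant increments); it shows that replacing $a_{N(t)}$ by $A_0(t)=R_0^{-1}(\log (t/\alpha _T))$ costs only $\hat h(\log (t/\alpha _T))\cdot o(1)=o\bigl(L_j(t)/r_0(A_0(t))\bigr)$, since $\hat h(\log (t/\alpha _T))=1/r_0(A_0(t))$ and $L_j(t)\to \infty $; and it absorbs the bounded perturbation $R_1$, whose effect is a shift of $R^{-1}$ by $O(\hat h)$, again negligible against the $L_2$ or $L_3$ normalisation. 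Combining these replacements with the sandwich $M_{N(t)}\le \bar X(t)\le M_{N(t)+1}$ turns the statements for $M_n$ into \eqref{f2} and \eqref{f3}.
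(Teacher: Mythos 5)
Your proposal is correct, and its outer skeleton---i.i.d.\ cycle maxima, the sandwich $Z_{N(t)}\le \bar X(t)\le Z_{N(t)+1}$, the renewal SLLN giving $\log N(t)=\log (t/\alpha _T)+o(1)$ a.s., and regular variation of $\hat r_0=(R_0^{-1})'$ to absorb both the $o(1)$ and the index shift---coincides exactly with the paper's proof of Theorem~\ref{thm:main1}. Where you genuinely diverge is in the treatment of the i.i.d.\ maxima: the paper never proves the iterated/triple-logarithm laws for i.i.d.\ sequences itself. It quotes them as Lemma~\ref{l1} (Theorem~1 of \cite{akmi}) for laws whose exponent $R_\xi $ satisfies condition $(\mathbb{U})$ exactly, and reduces the perturbed case $R=R_0+R_1$, $|R_1|\le C_1$, to the exact case by a quantile coupling: realizing $z_n=R_\xi ^{-1}(z_n^e)$ and $z_n'=R_0^{-1}(z_n^e)$ from the same exponential sample as in \eqref{f7}, the mean value theorem and Lemma~\ref{l2} give $|z_n-z_n'|\le 2C_1\hat r_0(\log n)(1+o(1))=O(1/r_0(a_0(n)))$, which is negligible against both $L_2(n)/r_0(a_0(n))$ and $L_3(n)/r_0(a_0(n))$; that is the content of Lemma~\ref{l3} and estimates \eqref{f11}--\eqref{f12}. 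You instead reprove the i.i.d.\ laws from first principles, folding $R_1$ directly into the Borel--Cantelli series (where it only perturbs multiplicative constants, harmless thanks to your $\pm \varepsilon $): direct Borel--Cantelli at $R_0$-level $\log n+(1\pm \varepsilon )L_2(n)$ for the $\limsup $, and geometric subsampling plus Kochen--Stone at level $\log n-(1\pm \varepsilon )L_3(n)$ for the $\liminf $; your identification of $\lambda =1$ as the critical triple-logarithm exponent, and of the subsampling as the mechanism that converts the naive double-log boundary into a triple log, is correct. What each route buys: yours is self-contained (it does not rest on \cite{akmi}) but substantially longer, since the correlation analysis must actually be carried out---your factorisation does yield $\mathbf{P}(A_j\cap A_k)=\mathbf{P}(A_j)\mathbf{P}(A_k)/\mathbf{P}(M_{n_j}\le \tilde w_{n_k})$ with ratio $\exp \bigl(\theta ^{j-k}\,O((\log k)^{1-\varepsilon })\bigr)$, and splitting pairs at $k-j$ of order $\log _\theta \log k$ makes the Kochen--Stone ratio tend to $1$, so you even get probability one without invoking a separate zero--one law, though this should be said explicitly---whereas the paper's coupling argument isolates the perturbation issue in a few lines and delegates the hard probabilistic core to the cited result. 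In effect, your sketch reproves Lemma~\ref{l1} rather than using it.
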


Our next result is a counterpart of Theorem~\ref{thm:main1} for discrete
processes taking values in some lattice in $\mathbb{R}$. Such processes are
important, among other fields, in the queuing theory. Assume that
%
\begin{equation}\label{f151}
\mathbf{P}(X(t)\in \{0,1,2,3,\ldots \})=1,\quad t\geq 0,
\end{equation}
and, for $k=0,1,2,3,\ldots{} $, put
\begin{equation*}
R_{k}:=-\log \mathbf{P}(\bar{X}(T_{1}) > k).
\end{equation*}
Similarly to \eqref{f1} and \eqref{f150} we can write a decomposition
%
\begin{equation}\label{f152}
R_{k}= R_{0}(k) + R_{1}(k),\quad k=0,1,2,3,\ldots ,
\end{equation}
where $R_{0}:\mathbb{R}\to \mathbb{R}$ and $R_{1}:\mathbb{R}\to \mathbb{R}$
are real-valued functions and $R_{1}$ is such that
%
\begin{equation}\label{f153}
|R_{1}(k)| \leq C_{1} < \infty , \quad k=0,1,2,3,\ldots .
\end{equation}

\begin{thm}\label{thm:main2}
Let $(X(t))_{t\geq 0}$ be a regenerative random process\index{regenerative random process} such that
\eqref{f151} holds. Assume that there exists a decomposition \eqref{f152}
such that \eqref{f153} is fulfilled and the function $R_{0}$ satisfies
condition $(\mathbb{U})$. Suppose also that $\alpha _{T} < \infty $.
\begin{itemize}
\item[(i)] The asymptotic relation
%
\begin{equation}\label{f154}
r_{0} (R_{0}^{-1} (x))= o(\log x),\quad x\to \infty ,
\end{equation}
entails
%
\begin{equation}
\label{f200}
\limsup _{t\rightarrow \infty }
\frac{ r_{0} (A_{0} (t)) (\bar{X}(t) -A_{0}
(t))}{L_{2} (t) }=1 \quad \textrm{a.s.}
\end{equation}
\item[(ii)] The asymptotic relation
%
\begin{equation}\label{f155}
r_{0} (R_{0}^{-1} (x))= o(\log \log x),\quad x\to \infty ,
\end{equation}
entails
%
\begin{equation}\label{f300}
\liminf _{t\rightarrow \infty } \frac{ r_{0} (A_{0} (t)) (\bar{X}(t) -A_{0}
(t))}{L_{3} (t) }=-1 \quad \textrm{a.s.}
\end{equation}
The functions $A_{0}$ and $r_{0}$ were defined in Theorem~\ref{thm:main1}.
\end{itemize}
\end{thm}

\begin{rem}
In the discrete setting we assume that there exist extensions of the
sequences $(R_{0}(k))$ and $(R_{1}(k))$ to functions defined on the whole
real line with the extension of $R_{0}$ being smooth. While such an
assumption might look artificial, it is necessary for keeping the paper
homogeneous and allows us to work both in continuous and discrete settings
with the same class of functions $\mathbb{U}$.
\end{rem}

The article is organized as follows. In Section~\ref{sec:prelim} we collect
and prove some auxiliary results needed in the proofs of our main theorems.
They are given in Section~\ref{sec:proofs}. In Section~\ref{sec:appl} we
apply Theorems~\ref{thm:main1} and~\ref{thm:main2} to some queuing systems\index{queuing system}
and birth--death processes.

\section{Preliminaries}\label{sec:prelim}

Let us consider a sequence $(\xi _{k})_{k\in \mathbb{N}}$ of independent
copies of a random variable $\xi $ with the distribution function $F_{\xi
}(x)=\mathbf{P}(\xi \leq x) =: 1 - \exp (-R_{\xi }(x))$. Put
%
\begin{equation}\label{f4}
z_{n} = \max _{1\leq i \leq n} \xi _{i} .
\end{equation}

The following result was proved in \cite{akmi}, see Theorem 1 therein.

\begin{lemma}\label{l1}
Assume that the distribution of $\xi $ is such that $R_{\xi }$ satisfies
condition $(\mathbb{U})$. With $a(n)=R_{\xi }^{-1}(\log n)$ it holds
%
\begin{equation}\label{f5}
\limsup _{n\rightarrow \infty } \frac{r_{\xi }(a(n))(z_{n}-a(n))}{L_{2}
(n)}=1 \quad \textrm{a.s.},
\end{equation}
and
%
\begin{equation}\label{f6}
\liminf _{n\rightarrow \infty } \frac{r_{\xi }(a(n))(z_{n}-a(n))}{L_{3}
(n)}=-1 \quad \textrm{a.s.},
\end{equation}
where, for large enough $x\in \mathbb{R}$,
\begin{equation*}
r_{\xi }(x):=R^{\prime }_{\xi }(x)= \frac{F^{\prime }_{\xi }(x)}{1-F^{\prime
}_{\xi }(x)}.
\end{equation*}
\end{lemma}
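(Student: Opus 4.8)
The plan is to build everything on the exact distributional identity $\mathbf{P}(z_n \le u) = F_\xi(u)^n = (1-e^{-R_\xi(u)})^n$, together with the elementary sandwich $\exp(-n e^{-R_\xi(u)}) \ge F_\xi(u)^n \ge \exp(-n e^{-R_\xi(u)} - C n e^{-2R_\xi(u)})$ valid once $R_\xi(u)$ is large. The analytic engine is a uniform expansion of $R_\xi$ around the level $a(n) = R_\xi^{-1}(\log n)$: since condition $(\mathbb{U})$ asserts that $\hat h = (R_\xi^{-1})' = 1/r_\xi(R_\xi^{-1}(\cdot))$ is regularly varying with index $\kappa$, the uniform convergence theorem for regularly varying functions should yield, for $s$ in the relevant bounded range,
\[
R_\xi\Bigl(a(n) + \frac{s}{r_\xi(a(n))}\Bigr) = \log n + s\,(1+o(1)), \qquad n\to\infty.
\]
First I would isolate this expansion as a deterministic lemma, also recording that $r_\xi(a(n))$ varies regularly in $\log n$ and that the increments satisfy $a(n_k)-a(n_{k-1}) = O\bigl(\log(n_k/n_{k-1})/r_\xi(a(n_k))\bigr)$; these facts let me pass freely between a level $u$ and the centred, scaled deviation $r_\xi(a(n))(u-a(n))$, and are precisely where condition $(\mathbb{U})$ enters.

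Plugging $u_n = a(n) + c\,L_2(n)/r_\xi(a(n))$ into the expansion makes $\mathbf{P}(z_n > u_n)$ of order $(\log n)^{-c}$, while $u_n = a(n) - c\,L_3(n)/r_\xi(a(n))$ makes $\mathbf{P}(z_n \le u_n)$ of order $\exp(-(\log\log n)^c)$. For the two one-sided bounds that must hold \emph{eventually} — namely $\limsup \le 1$ in \eqref{f5} and $\liminf \ge -1$ in \eqref{f6} — I would fix $\epsilon>0$, take a geometric subsequence $n_k = \lfloor \theta^k\rfloor$, and apply the first Borel--Cantelli lemma. Along this subsequence the sums $\sum_k \mathbf{P}(z_{n_k} > u_{n_k})$ and $\sum_k \mathbf{P}(z_{n_{k-1}} \le a(n_k)-(1+\epsilon)L_3(n_k)/r_\xi(a(n_k)))$ converge, their terms being of order $k^{-(1+\epsilon)}$ and $\exp(-\theta^{-1}(\log k)^{1+\epsilon})$. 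The gaps are then filled by monotonicity of $n\mapsto z_n$: for $n\in[n_{k-1},n_k)$ one has $z_n \le z_{n_k}$ (upper bound) and $z_n \ge z_{n_{k-1}}$ (lower bound), while the discrepancy between the normalising quantities at $n$ and at the subsequence endpoints is $O(1/r_\xi(a(n)))$, negligible against the $L_2/r_\xi$ and $L_3/r_\xi$ scales; letting $\theta\downarrow 1$ and $\epsilon\downarrow 0$ closes both bounds.

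For the two bounds that must hold \emph{infinitely often} I would use independent block maxima $w_k = \max_{n_{k-1} < i \le n_k}\xi_i$ and the second Borel--Cantelli lemma. The bound $\limsup \ge 1$ is straightforward: with $n_k=\lfloor\theta^k\rfloor$ and $u_{n_k} = a(n_k) + (1-\epsilon)L_2(n_k)/r_\xi(a(n_k))$ one finds $\mathbf{P}(w_k > u_{n_k})$ of order $k^{-(1-\epsilon)}$, whose sum diverges, and $w_k > u_{n_k}$ forces $z_{n_k} > u_{n_k}$. The delicate bound is $\liminf \le -1$, where the event $\{z_n \le u_n\}$ constrains the whole prefix $\xi_1,\dots,\xi_n$ rather than a single block. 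Here I would choose a \emph{sparser} subsequence with $\log(n_k/n_{k-1}) \sim C\log\log n_k$ for a suitable $C>1$, so that, on the one hand, the already established bound $\limsup\le 1$ guarantees $z_{n_{k-1}} \le a(n_{k-1}) + (1+\epsilon)L_2(n_{k-1})/r_\xi(a(n_{k-1})) \le u_{n_k}$ eventually — the growth of $a$ across the gap swallows the $L_2$ fluctuation of the prefix — and, on the other hand, the block still comprises almost all of the first $n_k$ indices so that $\mathbf{P}(w_k \le u_{n_k})$ stays of order $\exp(-(\log\log n_k)^{1-\epsilon})$, which remains summable to $+\infty$ (indeed $\log\log n_k \approx \log k$ along this subsequence). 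The independent events $\{w_k \le u_{n_k}\}$ then occur infinitely often, and combined with the eventual prefix control this yields $z_{n_k} = \max(z_{n_{k-1}}, w_k) \le u_{n_k}$ infinitely often.

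The main obstacle is twofold. First, every tail estimate rests on the uniform expansion of $R_\xi$ from the first paragraph, and making it uniform over the ranges of $s$ and $n$ that actually occur — in particular controlling $r_\xi(a(n))$ and the increments of $a$ both along and within the subsequence gaps — is exactly where the regular variation of $\hat h$ must be invoked with care. Second, the $\liminf \le -1$ direction is genuinely harder than its three companions, since the relevant event is not expressible through a single independent block; the resolution is the calibration of the subsequence, where the gap $\log(n_k/n_{k-1})$ must be large enough (of order $L_2(n_k)$) for the prefix to have drifted below the target level, yet small enough for the block events to remain non-summable. I expect this balancing act, rather than any individual estimate, to be the crux of the argument.
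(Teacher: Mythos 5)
Your proposal is correct, but it proves the lemma by a genuinely different route than the paper does. In fact the paper contains no proof of Lemma~\ref{l1} at all: the result is quoted from \cite{akmi}, and the paper records only the two-step architecture of that proof --- first establish \eqref{f5} and \eqref{f6} for the standard exponential law, where $R(x)=x$ and every tail probability is explicit, and then transfer to a general $R_{\xi }$ via the quantile coupling \eqref{f7}, $z_{n}\overset{d}{=}R_{\xi }^{-1}(z_{n}^{e})$, with the distortion $R_{\xi }^{-1}(z_{n}^{e})-R_{\xi }^{-1}(\log n)$ controlled by one application of the mean value theorem together with regular variation of $\hat{h}$ (exactly the device the paper reuses in its proof of Lemma~\ref{l3}, through Lemma~\ref{l2} and $z_{n}^{e}/\log n\to 1$ a.s.). You instead run the four Borel--Cantelli arguments directly in the general setting: the uniform expansion $R_{\xi }(a(n)+s/r_{\xi }(a(n)))=\log n+s(1+o(1))$, geometric subsequences plus gap-filling by monotonicity of $z_{n}$ for the two ``eventual'' bounds, independent block maxima and the second Borel--Cantelli lemma for $\limsup \geq 1$, and --- the step you rightly single out as the crux --- a sparser subsequence with $\log (n_{k}/n_{k-1})\sim C\,L_{2}(n_{k})$, $C>1$, combined with the already-established bound $\limsup \leq 1$ to push the prefix $z_{n_{k-1}}$ below the target level, for $\liminf \leq -1$. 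Your estimates check out: the tail orders $(\log n)^{-c}$ and $\exp (-(\log \log n)^{c})$, the summability and divergence claims along both subsequences (with $\log \log n_{k}\approx \log k$ on the sparse one), and the uniformity of the expansion over $|s|\leq C L_{2}(n)$, which follows from the uniform convergence theorem for regularly varying functions via a short bootstrap. As for what each approach buys: the paper's reduction confines all probabilistic work to the exponential case, where the delicate $\liminf \leq -1$ argument is explicit, and obtains generality by a soft deterministic comparison that transfers all four bounds simultaneously; your argument is self-contained, presupposes no exponential special case, and shows transparently where the scales $L_{2}$ and $L_{3}$ originate, at the price of threading the regular-variation machinery through every single estimate and of redoing the hardest step in full generality.
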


The proof of Lemma~\ref{l1}, given in \cite{akmi}, consists of two steps.
Firstly, the claim is established for the standard exponential distribution
$\tau ^{e}$, that is, assuming $\mathbf{P}(\xi \leq x)=\mathbf{P}(\tau ^{e}
\leq x)= 1- \exp (-x)$. In the second step the claim is proved for an
arbitrary $R_{\xi }$ using regular variation and the representation
%
\begin{equation}\label{f7}
R_{\xi }^{-1}(\tau ^{e}) \overset{d}{=} \xi \quad \mbox{and} \quad R_{ \xi
}^{-1}(z_{n} ^{e}) \overset{d}{=} z_{n}.
\end{equation}
Here and hereafter $z_{n} ^{e} = \max _{1\leq i \leq n} \tau ^{e}_{i}$ and
$(\tau ^{e}_{i})_{i\in \mathbb{N}}$ are independent copies of $\tau ^{e}$.

We need the following generalization of Lemma~\ref{l1}.

\begin{lemma}\label{l3}
Assume that the law of $\xi $ is such that the function $R_{\xi }$ possesses
a decomposition \eqref{f1} with $R_{1}$ satisfying \eqref{f150} and $R_{0}$
satisfying condition $(\mathbb{U})$. Then
%
\begin{equation}\label{f8}
\limsup _{n\rightarrow \infty } \frac{r_{0}(a_{0}(n))(z_{n}-a_{0}(n))}{L_{2}
(n)}=1 \quad \textrm{a.s.},
\end{equation}
and
%
\begin{equation}\label{f9}
\liminf _{n\rightarrow \infty } \frac{r_{0}(a_{0}(n))(z_{n}-a_{0}(n))}{L_{3}
(n)}=-1 \quad \textrm{a.s.},
\end{equation}
where $a_{0}{(n)}=R_{0}^{-1}(\log {n})$ and $r_{0} (x)=R_{0}^{\prime }(x)$.
\end{lemma}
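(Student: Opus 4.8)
The plan is to reduce Lemma~\ref{l3} to Lemma~\ref{l1} by absorbing the bounded perturbation $R_1$ into the analysis. The key observation is that $R_\xi = R_0 + R_1$ with $|R_1| \le C_1$, so $R_\xi$ satisfies condition $(\mathbb{U})$ itself (the derivative $\hat h$ is governed by $R_0$, whose inverse is regularly varying), and therefore Lemma~\ref{l1} applies directly to $R_\xi$ with centering $a(n) = R_\xi^{-1}(\log n)$ and scaling $r_\xi(a(n))$. The task is then to show that replacing the pair $(a(n), r_\xi(a(n)))$ by $(a_0(n), r_0(a_0(n)))$, where $a_0(n) = R_0^{-1}(\log n)$, does not change the limiting constants $\pm 1$ in \eqref{f8}--\eqref{f9}.

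First I would control the displacement of the centering sequences. Since $R_\xi(a(n)) = \log n = R_0(a_0(n))$ and $R_\xi = R_0 + R_1$, a mean value theorem argument gives $R_0(a(n)) - R_0(a_0(n)) = -R_1(a(n))$, hence $|a(n) - a_0(n)| \le C_1 / r_0(\theta_n)$ for some intermediate point $\theta_n$. Using the regular variation of $\hat h = 1/r_0(R_0^{-1}(\cdot))$ I would verify that $r_0(a(n))$ and $r_0(a_0(n))$ are asymptotically equivalent and that $r_0(a_0(n))\,(a(n) - a_0(n)) = O(1)$, which is asymptotically negligible against $L_2(n) = \log\log n$ and $L_3(n) = \log\log\log n$. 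Second, I would compare the scalings $r_\xi$ and $r_0$: since $r_\xi = R_\xi' = R_0' + R_1' = r_0 + r_1$, I must argue that $r_1$ is negligible relative to $r_0$ at the relevant points, so that $r_\xi(a(n)) \sim r_0(a_0(n))$; this again follows from regular variation, which forces $r_0$ to grow in a controlled (regularly varying) manner so that a bounded additive correction to $R_0$ cannot perturb its derivative to leading order.

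The main obstacle I anticipate is the second comparison, namely bounding $r_1 = R_1'$. The hypothesis \eqref{f150} only bounds $R_1$ itself, not its derivative, so there is no a priori control on $r_1$; a bounded function can have wildly oscillating derivative. The cleanest route around this is to avoid differentiating $R_1$ altogether and instead work with the representation \eqref{f7}: I would express $z_n$ via the exponential extremes $z_n^e$ and write $z_n \overset{d}{=} R_\xi^{-1}(z_n^e)$, but since only $R_0$ is assumed smooth and regularly varying in the required sense, I would use $R_0^{-1}$ as the main transformation and treat the effect of $R_1$ as a bounded shift in the argument. Concretely, from $F_\xi(x) = 1 - e^{-R_0(x) - R_1(x)}$ and $|R_1| \le C_1$ one gets two-sided bounds $e^{-C_1} e^{-R_0(x)} \le 1 - F_\xi(x) \le e^{C_1} e^{-R_0(x)}$, which translate into a sandwich for the quantiles: the true centering $a(n)$ lies within a bounded $R_0$-distance of $R_0^{-1}(\log n \pm C_1)$, and the regular variation of $\hat h$ guarantees $R_0^{-1}(\log n \pm C_1) - R_0^{-1}(\log n) = o(L_3(n)/r_0(a_0(n)))$. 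This sandwiching sidesteps any need to differentiate $R_1$ and is where the regular-variation hypothesis does the essential work.
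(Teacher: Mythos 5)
Your final plan --- the quantile sandwich in the last paragraph --- is essentially the paper's own proof. The paper couples everything to a single exponential sample via \eqref{f7}, so that $z_n \overset{d}{=} R_\xi^{-1}(z_n^e)$ and $z_n' := R_0^{-1}(z_n^e)$ is the running maximum of i.i.d.\ variables with distribution function $F_0 = 1-\exp(-R_0)$, to which Lemma~\ref{l1} applies; it then uses exactly your two-sided bound $R_0^{-1}(y-C_1)\le R_\xi^{-1}(y)\le R_0^{-1}(y+C_1)$, the mean value theorem for $R_0^{-1}$, the a.s.\ relation $z_n^e/\log n\to 1$ and Lemma~\ref{l2}, to conclude $|z_n-z_n'|\le 2C_1(1+o(1))/r_0(a_0(n))$ a.s., which after multiplication by $r_0(a_0(n))$ is $O(1)$, hence negligible against both $L_2(n)$ and $L_3(n)$. (Note the sandwich must be applied at the random argument $z_n^e$, not merely at $\log n$; this is harmless precisely because of $z_n^e/\log n\to1$ and regular variation, and your sketch implicitly has this.) You also correctly diagnosed why the naive plan fails: \eqref{f150} gives no control on $R_1'$, and indeed $R_\xi$ need not even be differentiable, so your opening assertion that ``$R_\xi$ satisfies condition $(\mathbb{U})$ itself'' and that Lemma~\ref{l1} applies directly to $R_\xi$ with scaling $r_\xi(a(n))$ is false and should be deleted; fortunately the sandwich argument never uses it, provided you say explicitly that the limit theorem being invoked is Lemma~\ref{l1} for the auxiliary variables $\xi_i'=R_0^{-1}(\tau_i^e)$, not for $\xi$.

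One genuine gap remains. Condition $(\mathbb{U})$ only makes $R_0$ nondecreasing and differentiable on some half-line $[x_0,\infty)$, so $1-\exp(-R_0)$ need not be a distribution function and $R_0^{-1}(\tau^e)$ need not have a meaningful law; your proposal silently assumes $R_0$ is globally nice. The paper's Step~2 repairs this: replace $(R_0,R_1)$ by nondecreasing differentiable $(\tilde R_0,\tilde R_1)$ that agree with $(R_0,R_1)$ on $[x_0,\infty)$ and vanish on $(-\infty,0]$, run the Step~1 argument for $\tilde R=\tilde R_0+\tilde R_1$, and then observe that the original and modified maxima coincide, $z_n=\tilde z_n$, for all $n\ge n_0:=\min\{i\ge 1:\tau_i^e\ge R(x_0)\}$, while $\tilde R_0^{-1}=R_0^{-1}$ for large arguments, so the normalizing sequences $a_0(n)$ and $r_0(a_0(n))$ are unchanged. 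Without some such truncation your reduction to Lemma~\ref{l1} is not licit as stated, though the repair is routine.
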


To prove Lemma~\ref{l3} we need the following simple result, see Theorem~3.1
in \cite{bu}.

\begin{lemma}\label{l2}
Let $H$ be a function regularly varying at $+\infty $ and let $(c_{n})_{n\in
\mathbb{N}}$ and $(d_{n})_{n\in \mathbb{N}}$ be two sequences of real numbers
such that $\lim _{n\to \infty }c_{n}=+\infty $, $\lim _{n\to \infty
}c_{n}/d_{n}= 1$. Then
\begin{equation*}
\lim _{n\to \infty }\frac{H(c_{n})}{H(d_{n})}=1.
\end{equation*}
\end{lemma}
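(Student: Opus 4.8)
The plan is to reduce the statement to the Uniform Convergence Theorem for regularly varying functions. Recall from the definition stated above that $H$ regularly varying at $+\infty$ with index $\kappa$ means $H(x)=x^{\kappa}V(x)$ with $V$ slowly varying; equivalently,
\[
\lim_{t\to\infty}\frac{H(t\lambda)}{H(t)}=\lambda^{\kappa},\qquad \lambda>0.
\]
In particular $H$ is eventually positive, and since $c_n\to+\infty$ together with $c_n/d_n\to 1$ forces $d_n\to+\infty$ as well, the ratios $H(c_n)/H(d_n)$ are well defined for all large $n$ and it suffices to prove $H(d_n)/H(c_n)\to 1$.

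First I would set $\lambda_n=d_n/c_n$, so that $d_n=c_n\lambda_n$ and, by hypothesis, $\lambda_n\to 1$; in particular there is $n_0$ with $\lambda_n\in[1/2,2]$ for all $n\ge n_0$. Writing
\[
\frac{H(d_n)}{H(c_n)}=\frac{H(c_n\lambda_n)}{H(c_n)},
\]
one would like to pass to the limit using the pointwise relation above. The difficulty — and the only genuine point of the proof — is that $\lambda_n$ is \emph{not} fixed: it drifts to its limit simultaneously with $c_n\to\infty$, so pointwise convergence of $H(t\lambda)/H(t)$ at a single value of $\lambda$ does not apply directly.

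This is exactly what the Uniform Convergence Theorem (see, e.g., \cite{BGT}) overcomes: for a function regularly varying at $+\infty$ with index $\kappa$, the convergence $H(t\lambda)/H(t)\to\lambda^{\kappa}$ holds \emph{uniformly} for $\lambda$ in any compact subinterval of $(0,\infty)$. Fixing $\varepsilon>0$ and applying the theorem on $[1/2,2]$, there is $T$ such that $|H(t\lambda)/H(t)-\lambda^{\kappa}|<\varepsilon$ for all $t\ge T$ and all $\lambda\in[1/2,2]$. Choosing $n$ large enough that $c_n\ge T$ and $\lambda_n\in[1/2,2]$, and taking $t=c_n$, $\lambda=\lambda_n$, gives $|H(c_n\lambda_n)/H(c_n)-\lambda_n^{\kappa}|<\varepsilon$.

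Finally, since $\lambda_n\to 1$ and $\lambda\mapsto\lambda^{\kappa}$ is continuous at $\lambda=1$ for every real $\kappa$, we have $\lambda_n^{\kappa}\to 1$, and a triangle-inequality estimate yields
\[
\limsup_{n\to\infty}\left|\frac{H(d_n)}{H(c_n)}-1\right|
\le \varepsilon+\limsup_{n\to\infty}\bigl|\lambda_n^{\kappa}-1\bigr|=\varepsilon.
\]
Letting $\varepsilon\downarrow 0$ gives $H(d_n)/H(c_n)\to 1$, hence $H(c_n)/H(d_n)\to 1$, as required. (Alternatively, one could invoke Potter's bounds, which follow from the same Uniform Convergence Theorem and furnish two-sided estimates on $H(d_n)/H(c_n)$ that pinch to $1$ as $\lambda_n\to 1$.)
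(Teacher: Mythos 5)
Your proof is correct. Note, however, that the paper itself does not prove Lemma~\ref{l2} at all: it simply cites it as Theorem~3.1 of \cite{bu} (Buldygin, Klesov and Steinebach, on generalized inverses of a subclass of Avakumovi\'{c} functions), so there is no internal argument to compare against. What you supply is a complete, self-contained derivation from the Uniform Convergence Theorem of Karamata theory (Theorem~1.5.2 in \cite{BGT}): you correctly isolate the one genuine issue — that $\lambda_n=d_n/c_n$ drifts toward $1$ simultaneously with $c_n\to\infty$, so the pointwise limit $H(t\lambda)/H(t)\to\lambda^{\kappa}$ at fixed $\lambda$ is not directly applicable — and resolve it by invoking uniformity of that convergence on the compact interval $[1/2,2]$, after which the triangle inequality and continuity of $\lambda\mapsto\lambda^{\kappa}$ at $\lambda=1$ finish the argument. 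The small bookkeeping steps (eventual positivity of $H$, $d_n\to+\infty$, reduction to $H(d_n)/H(c_n)\to 1$) are all in order. What your route buys is self-containedness and transparency: the reader sees exactly which property of regular variation is used, rather than being sent to a result stated in a more general framework; the citation route of the paper buys brevity and covers a wider class of functions than regularly varying ones. The alternative you mention via Potter's bounds would work equally well and is a matter of taste.
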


\begin{proof}[Proof of Lemma~\ref{l3}]
Fix a sequence of standard exponential random variables $(\tau
_{i}^{e})_{i\in \mathbb{N}}$ and assume without loss of generality that the
sequence $(z_{n})_{n\in \mathbb{N}}$ is constructed from $(\tau
_{i}^{e})_{i\in \mathbb{N}}$ via formula \eqref{f7}. The subsequent proof is
divided into two steps.

\noindent%
\textsc{Step 1.} Suppose additionally that the function $R_{0}$ is everywhere
nondecreasing, differentiable, and $R_{0} (-\infty )=0$. Then $F_{0} (x):=1 -
\exp (-R_{0}(x))$ is a distribution function. Put $\xi ^{\prime }_{i}=
R_{0}^{-1}(\tau ^{e}_{i})$ for $i\in \mathbb{N}$ and let $z_{n}^{\prime } =
\max _{1\leq i \leq n} \xi _{i}^{\prime }$. From Lemma~\ref{l1} we infer
%
\begin{equation}\label{f10}
\limsup _{n\rightarrow \infty } \frac{r_{0}(a_{0}(n))(z_{n}^{\prime
}-a_{0}(n))}{L_{2} (n)}=1 \quad \textrm{a.s.}
\end{equation}
Let $C_{1}$ be a constant such that \eqref{f150} holds. From the definition
of the function $R_{\xi }^{-1}$ and decomposition \eqref{f1} we obtain
\begin{equation*}
R_{0}^{-1}(x- C_{1})\leq R_{\xi }^{-1}(x) \leq R_{0}^{-1}(x+ C_{1}), \quad
x\in \mathbb{R},
\end{equation*}
and thereupon
\begin{equation*}
R_{0}^{-1}(z_{n}^{e}- C_{1})\leq R_{\xi }^{-1}(z_{n}^{e}) \leq
R_{0}^{-1}(z_{n}^{e}+ C_{1}).
\end{equation*}
Hence, by monotonicity of $R_{0}^{-1}$, we have
%
\begin{equation}\label{f11}
| R_{\xi }^{-1}(z_{n}^{e}) -R_{0}^{-1}(z_{n}^{e})|\leq R_{0}^{-1}(z_{n}^{e}+
C_{1}) - R_{0}^{-1}(z_{n}^{e}- C_{1}) = 2C_{1} \hat{r}_{0} (z_{n}^{e}+ C_{1}
(2\theta _{n} -1)),
\end{equation}
where the equality follows from the mean value theorem for differentiable
functions, $\hat{r}_{0}(x) = (R_{0}^{-1}(x))'$ and $ 0\leq \theta _{n} \leq
1$.

It is known, see \cite[Chapter 4, Example 4.3.3]{gal}, that
\begin{equation*}
\lim _{n\to \infty }\frac{z_{n}^{e}}{\log n}=1\quad \textrm{a.s.}
\end{equation*}
Thus, from Lemma~\ref{l2} we deduce
\begin{equation*}
\lim _{n\to \infty } \frac{\hat{r}_{0}(z_{n}^{e} + C_{1} (2\theta _{n}
-1))}{\hat{r}_{0} (\log n)}= 1 \quad \textrm{a.s.}
\end{equation*}
In conjunction with \eqref{f11} this yields
%
\begin{equation}\label{f12}
| R_{\xi }^{-1}(z_{n}^{e}) -R_{0}^{-1}(z_{n}^{e})|\leq 2C_{1} \hat{r}_{0}
(\log n) (1+o(1)) = \frac{2C_{1}}{r_{0} (a_{0} (n) )} (1+o(1)) .
\end{equation}
Taking together relations \eqref{f10}, \eqref{f12} we arrive at
\eqref{f8}.

Similarly, from Lemma~\ref{l1} we have
%
\begin{equation}\label{f13}
\liminf _{n\rightarrow \infty } \frac{r_{0}(a_{0}(n))(z_{n}^{\prime
}-a_{0}(n))}{L_{3} (n)}=-1 \quad \textrm{a.s.}
\end{equation}
Therefore, \eqref{f9} follows from \eqref{f13} and \eqref{f12}.

\noindent%
\textsc{Step 2.} Let us now turn to the general case where the function
$R_{0}$ is nondecreasing and differentiable on some interval $[x_{0} , \infty
)$ with $x_{0}>0$. Recall decomposition \eqref{f1}. Let
$\tilde{R}_{0}:\mathbb{R}\to \mathbb{R}$ and $\tilde{R}_{1}:\mathbb{R}\to
\mathbb{R}$ be arbitrary nondecreasing differentiable functions such that
\begin{align*}
\tilde{R}_{0}(x)&=R_{0}(x)\quad \text{and}\quad \tilde{R}_{1}(x)=R_{1}(x)
\quad \text{for } x\geq x_{0},
\\*
\tilde{R}_{0}(x)&=\tilde{R}_{1}(x)=0\quad \text{for } x\leq 0.
\end{align*}
Put
\begin{equation*}
\tilde{R}(x):=\tilde{R}_{0}(x)+\tilde{R}_{1}(x),\quad x\in \mathbb{R}.
\end{equation*}
The functions $\tilde{R}_{0}$, $\tilde{R}_{1}$ and $\tilde{R}$ satisfy all
the assumptions of Step 1. Thus, if we set
\begin{equation*}
\tilde{\xi }_{i} = \tilde{R}^{-1}(\tau _{i}^{e}) , \qquad \tilde{z}_{n} = \max
_{1\leq i \leq n} \tilde{\xi }_{i},
\end{equation*}
then the sequence $(\tilde{z}_{n})_{n\in \mathbb{N}}$ satisfies \eqref{f8}
and \eqref{f9} with the same normalizing functions $r_{0} (a_{0}(n))$ and
$a_{0}(n)$. The latter holds true since for sufficiently large $x>0$ we have
$\tilde{R}_{0}^{-1}(x) = {R}_{0}^{-1} (x)$.

It remains to note that the asymptotics of $(\tilde{z}_{n})$ and $({z}_{n})$
are the same. Indeed, set
\begin{equation*}
n_{0}: = \min (i\geq 1: \tau _{i}^{e} \geq y_{0} ),
\end{equation*}
where $y_{0} := R(x_{0}) = \tilde{R}(x_{0})$. Then ${z}_{n} = \tilde{z}_{n}$
for $n\geq n_{0}$ and we see that both \eqref{f8} and \eqref{f9} hold for
$({z}_{n})$ as well. This finishes the proof of Lemma~\ref{l2}.
\end{proof}

The next lemma is a counterpart of Lemma~\ref{l3} for discrete distributions.
Assume that $\xi $ has distribution
\begin{equation*}
\mathbf{P}(\xi =k ) = p_{k},
\end{equation*}
where $p_{k}\geq 0$ and $\sum _{k=0}^{\infty } p_{k} =1$. Put
\begin{equation*}
q(k) = \sum _{i> k} p_{i}=\exp (-R_{\xi ,k}).
\end{equation*}

\begin{lemma}\label{l4}
Let $\xi $ be a random variable taking values in $\{0,1,2,3,\ldots \}$ and
let $R_{\xi ,k}$ be such that there exists a decomposition \eqref{f152} with
$R_{1}$ satisfying \eqref{f153} and $R_{0}$ satisfying condition
$(\mathbb{U})$.
\begin{itemize}
\item[(i)] if \eqref{f154} holds, then $(z_{n})$ satisfies equality
    \eqref{f8};
\item[(ii)] if \eqref{f155} holds, then $(z_{n})$ also satisfies
    \eqref{f9}.
\end{itemize}
\end{lemma}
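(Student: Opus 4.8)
The plan is to mirror the two-step argument of Lemma~\ref{l3}, but to keep careful track of an additional $O(1)$ discretization error that appears because $\xi$ is now integer-valued, and to show that the hypotheses \eqref{f154} and \eqref{f155} are precisely what is needed to absorb this error into the respective normalizations.

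First I would set up an explicit coupling of the discrete maxima with a sequence of standard exponentials, the analogue of \eqref{f7}. Since $R_{\xi,k}=-\log\mathbf{P}(\xi>k)$ is nondecreasing in $k$, one has $\mathbf{P}(\xi>k)=\mathbf{P}(\tau^{e}>R_{\xi,k})$, so setting $\xi_{i}:=\#\{k\ge 0: R_{\xi,k}<\tau_{i}^{e}\}$ with $(\tau_{i}^{e})$ i.i.d.\ standard exponential reproduces the law of $\xi$. Taking the maximum over $i$ and writing $z_{n}^{e}=\max_{1\le i\le n}\tau_{i}^{e}$, this yields the representation $z_{n}=\#\{k\ge 0: R_{\xi,k}<z_{n}^{e}\}$, since the count is monotone in the exponential variable.

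Next, as in Step~2 of Lemma~\ref{l3}, I would extend $R_{0}$ to an everywhere nondecreasing differentiable function (which changes nothing for large arguments) and introduce the continuous comparison variable $z_{n}':=R_{0}^{-1}(z_{n}^{e})$. Since the extended $R_{0}$ is the hazard function of $F_{0}=1-\exp(-R_{0})$ and satisfies $(\mathbb{U})$, Lemma~\ref{l1} shows that $z_{n}'$ satisfies \eqref{f8} and \eqref{f9}. The core estimate is a bound on $|z_{n}-z_{n}'|$. Using $|R_{1}(k)|\le C_{1}$ from \eqref{f153} I would sandwich
\[
\#\{k: R_{0}(k)<z_{n}^{e}-C_{1}\}\le z_{n}\le \#\{k: R_{0}(k)<z_{n}^{e}+C_{1}\},
\]
and, since $R_{0}$ is eventually strictly increasing, each counting function differs from the continuous inverse $R_{0}^{-1}(z_{n}^{e}\pm C_{1})$ by at most a constant (the integer-rounding error). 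Combining this with the mean-value-theorem estimate \eqref{f11} and Lemma~\ref{l2} exactly as in \eqref{f12}, I expect to obtain
\[
|z_{n}-z_{n}'|\le \frac{2C_{1}}{r_{0}(a_{0}(n))}(1+o(1))+C\quad\textrm{a.s.}
\]

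Finally I would transfer \eqref{f8} and \eqref{f9} from $z_{n}'$ to $z_{n}$ by multiplying the above bound by $r_{0}(a_{0}(n))$ and dividing by the relevant logarithm. The shift term contributes $2C_{1}(1+o(1))/L_{j}(n)\to 0$ exactly as in the continuous case; the genuinely new feature is the rounding term $C\,r_{0}(a_{0}(n))/L_{j}(n)$. Since $a_{0}(n)=R_{0}^{-1}(\log n)$, the change of variable $x=\log n$ turns \eqref{f154} into $r_{0}(a_{0}(n))=o(\log\log n)=o(L_{2}(n))$ and \eqref{f155} into $r_{0}(a_{0}(n))=o(\log\log\log n)=o(L_{3}(n))$, which make the rounding term vanish for the $\limsup$ and $\liminf$ statements respectively. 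The main obstacle is precisely this discretization bookkeeping: isolating the new $O(1)$ error and recognizing that, via the reparametrization $x=\log n$, conditions \eqref{f154}--\eqref{f155} are exactly calibrated to control it, whereas no such condition was required in the continuous Lemma~\ref{l3}.
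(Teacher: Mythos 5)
Your proposal is correct and follows essentially the same route as the paper: your counting-function coupling $z_{n}=\#\{k\ge 0: R_{\xi,k}<z_{n}^{e}\}$ is exactly the paper's discrete inverse $\lceil R_{\xi,k}^{-1}(z_{n}^{e})\rceil$, your sandwich via \eqref{f153} plus rounding reproduces the paper's estimate \eqref{f111} (with constant $4$ there in place of your $C$), and your observation that \eqref{f154}--\eqref{f155} under $x=\log n$ give $r_{0}(a_{0}(n))=o(L_{2}(n))$ and $o(L_{3}(n))$ is precisely how the paper absorbs the discretization error. The only difference is cosmetic: you spell out the extension of $R_{0}$ and the final calibration step, which the paper leaves to the reader.
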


\begin{proof}
Similarly to Lemma~\ref{l3} the proof is divided into two steps. We provide
the details only for the first step leaving the second step for an interested
reader. Thus, we put $\xi _{i}^{\prime } = R_{0}^{-1}(\tau ^{e}_{i})$ for
$i\in \mathbb{N}$. Note that $\xi _{i}^{\prime }$ are i.i.d.~with the
distribution function $F_{0} (x)=1 - \exp (-R_{0}(x))$. Thus, for
$z_{n}^{\prime } = \max _{1\leq i \leq n}\xi _{i}^{\prime }$, equality
\eqref{f10} holds.

Let us consider
\begin{equation*}
\lceil R_{\xi ,k}^{-1}(y) \rceil =\inf \left \{  k=0,1,2,3,\ldots : R_{ \xi
,k}\geq y\right \}  ,
\end{equation*}
then
\begin{equation*}
|R_{\xi ,k}^{-1}(y) - \lceil R_{\xi ,k}^{-1}(y) \rceil | \leq 1,
\end{equation*}
for all $y\in \mathbb{R}$, and therefore
\begin{equation*}
|R_{\xi ,k}^{-1}(z_{n}^{e}) - \lceil R_{\xi ,k}^{-1}(z_{n}^{e}) \rceil | \leq
1 , \qquad |R_{0}^{-1}(z_{n}^{e}) - \lceil R_{0}^{-1}(z_{n}^{e}) \rceil |
\leq 1 .
\end{equation*}
Further, condition \eqref{f153} and monotonicity of the function $\lceil
\cdot \rceil $ both imply
\begin{equation*}
\lceil R_{0}^{-1}(z_{n}^{e} -C_{1})\rceil \leq \lceil R_{\xi ,k}^{-1}(z_{n}^{e})
\rceil \leq \lceil R_{0}^{-1}(z_{n}^{e} +C_{1})\rceil .
\end{equation*}
Combining the above estimates, we derive
\begin{equation*}
R_{0}^{-1}(z_{n}^{e} -C_{1}) -2 \leq R_{\xi ,k}^{-1}(z_{n}^{e}) \leq R_{0}^{-1}(z_{n}^{e}
+C_{1}) +2.
\end{equation*}
This means
%
\begin{eqnarray}\label{f111}
| R_{\xi ,k}^{-1}(z_{n}^{e}) -R_{0}^{-1}(z_{n}^{e})|&\leq &
R_{0}^{-1}(z_{n}^{e}+ C_{1}) -R_{0}^{-1}(z_{n}^{e}- C_{1}) +4 \nonumber
\\
&\leq & \frac{2C_{1}}{r_{0} (a_{0} (n) )} (1+o(1)) +4,
\end{eqnarray}
see estimates (\ref{f11}), (\ref{f12}).

Assuming \eqref{f154} we see that \eqref{f8} holds. Similarly, condition
\eqref{f155} yields (\ref{f9}).
\end{proof}

The next simple lemma is probably known, however we prefer to give an
elementary few lines proof.

\begin{lemma}\label{l5}
For arbitrary $p>1$ and $b\in \mathbb{R}$ it holds
%
\begin{equation}\label{f45}
\Lambda _{n} := \sum _{k=1}^{n} \frac{p^{k}}{k^{b}} =
\frac{p^{n+1}}{(p-1)n^{b}} (1+o(1)),\quad n\to \infty .
\end{equation}
\end{lemma}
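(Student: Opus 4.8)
The plan is to isolate the largest summand---the last one, $p^n/n^b$---and to show that after factoring it out the remaining sum converges to the geometric constant $p/(p-1)$. Concretely, I would factor $p^n/n^b$ out of every term and substitute $j=n-k$, which gives
\[
\Lambda_n = \frac{p^n}{n^b}\sum_{j=0}^{n-1} p^{-j}\Bigl(1-\frac{j}{n}\Bigr)^{-b}.
\]
Since $\frac{p^{n+1}}{(p-1)n^b}=\frac{p^n}{n^b}\cdot\frac{p}{p-1}$ and $\sum_{j\ge 0}p^{-j}=\frac{p}{p-1}$, the assertion reduces to proving that $\Sigma_n:=\sum_{j=0}^{n-1}p^{-j}(1-j/n)^{-b}\to\frac{p}{p-1}$.

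To establish this limit I would split the sum at $j=\lfloor n/2\rfloor$. On the head $0\le j\le n/2$ one has $1-j/n\ge 1/2$, so the factor $(1-j/n)^{-b}$ stays between $\min(1,2^b)$ and $\max(1,2^b)$ and is therefore bounded by a constant $M=M(b)$, while for each fixed $j$ it tends to $1$ as $n\to\infty$. The summands are then dominated by the summable sequence $Mp^{-j}$, so dominated convergence for series shows that the head converges to $\sum_{j\ge 0}p^{-j}=\frac{p}{p-1}$.

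It remains to show that the tail $n/2<j\le n-1$ is negligible, and this is the only delicate point: when $b>0$ the factor $(1-j/n)^{-b}$ can be as large as $n^b$ near $j=n-1$. I would control it crudely via $1-j/n\ge 1/n$, which yields $(1-j/n)^{-b}\le n^{|b|}$ throughout the tail, while $p^{-j}\le p^{-n/2}$ there; since the tail has at most $n/2$ terms, it is bounded by $\frac{1}{2}n^{1+|b|}p^{-n/2}$, which tends to $0$ because exponential decay beats any polynomial. Combining the head and tail limits gives $\Sigma_n\to\frac{p}{p-1}$ and hence \eqref{f45}. The main obstacle is exactly this endpoint blow-up of $(1-j/n)^{-b}$ for positive $b$, which is tamed only because it is weighted by the exponentially small factor $p^{-j}$.
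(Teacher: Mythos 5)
Your proof is correct, but it takes a genuinely different route from the paper's. The paper proves the lemma in two lines via the Stolz--Ces\`{a}ro theorem: with $b_{n}=p^{n+1}/((p-1)n^{b})$ it computes
\[
\lim_{n\to\infty}\frac{\Lambda_{n}-\Lambda_{n-1}}{b_{n}-b_{n-1}}
=\lim_{n\to\infty}\frac{p^{n}/n^{b}}{b_{n}-b_{n-1}}
=\lim_{n\to\infty}\frac{p-1}{p-n^{b}/(n-1)^{b}}=1,
\]
which immediately yields $\Lambda_{n}/b_{n}\to 1$ (the theorem applies because $b_{n}\to\infty$ and is eventually strictly increasing, the last displayed denominator being eventually positive). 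You instead factor out the dominant summand $p^{n}/n^{b}$, reverse the summation index, and prove $\Sigma_{n}=\sum_{j=0}^{n-1}p^{-j}(1-j/n)^{-b}\to p/(p-1)$ by a head/tail split: dominated convergence for series on the range $j\le n/2$, and the crude bound $\frac{1}{2}n^{1+|b|}p^{-n/2}\to 0$ on the tail, where you correctly isolate and tame the only delicate point, the endpoint blow-up of $(1-j/n)^{-b}$ when $b>0$. The trade-off: the paper's argument is shorter and reduces everything to a single appeal to a named theorem, at the cost of having to verify (or at least implicitly trust) the hypotheses of Stolz--Ces\`{a}ro; yours is longer but fully self-contained and makes the mechanism transparent --- the sum is governed by its last few terms, and the constant $p/(p-1)$ is nothing but the geometric series $\sum_{j\ge 0}p^{-j}$.
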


\begin{proof}
By the Stolz--Ces\'{a}ro theorem we have
\begin{align*}
\lim _{n\to \infty }\frac{(p-1)n^{b}\Lambda _{n}}{p^{n+1}}&=\lim _{n \to
\infty } \frac{\Lambda _{n}-\Lambda
_{n-1}}{\frac{p^{n+1}}{(p-1)n^{b}}-\frac{p^{n}}{(p-1)(n-1)^{b}}}
\\
&=\lim _{n\to \infty }
\frac{\frac{p^{n}}{n^{b}}}{\frac{p^{n+1}}{(p-1)n^{b}}-\frac{p^{n}}{(p-1)(n-1)^{b}}}=
\lim _{n\to \infty }\frac{p-1}{p-\frac{n^{b}}{(n-1)^{b}}}=1.
\end{align*}
The proof is complete.
\end{proof}

\section{Proofs of Theorems \ref{thm:main1} and \ref{thm:main2}}\label{sec:proofs}

\begin{proof}[Proof of Theorem~\ref{thm:main1}]
Let us start with a proof of equality \eqref{f2}. To this end, we introduce
the following notation
\begin{equation*}
Y_{k} = \sup _{ S_{k-1}\leq t < S_{k}} X(t) , \qquad Z_{n} = \max _{1 \leq k
\leq n} Y_{k},\quad k\in \mathbb{N}.\vadjust{\eject}
\end{equation*}
Since $(S_{k})$ are the moments of regeneration\index{regeneration} of the process $(X(t))_{t\geq
0}$, $(Y_{k})$ are i.i.d. random variables. Furthermore, it is clear that the
sequence $(Y_{k})$ satisfies conditions of Lemma~\ref{l3}. Therefore,
%
\begin{equation}\label{f14}
\limsup _{n\rightarrow \infty } \frac{r_{0}(a_{0}(n))(Z_{n}-a_{0}(n))}{L_{2}
(n)}=1 \quad \textrm{a.s.}
\end{equation}
Denote by $N$ the counting process for the sequence $(S_{k})$, that is,
\begin{equation*}
N(t) = \max \{k\geq 0 : S_{k} \leq t\}, \quad t\geq 0.
\end{equation*}
Since $\lim _{t\to \infty }N(t)=+\infty $ a.s. and $N(t)$ runs through all
positive integers, from \eqref{f14} we obtain
%
\begin{equation}\label{f15}
\limsup _{t\rightarrow \infty } \frac{r_{0}(R_{0}^{-1}(\log
N(t)))(Z_{N(t)}-R_{0}^{-1}(\log N(t)))}{L_{2} (N(t))}=1 \quad \textrm{a.s.}
\end{equation}
By the strong law of large numbers for $N$ we have
%
\begin{equation}\label{f160}
\lim _{t\rightarrow \infty }\frac{N(t)}{t} = \frac{1}{\alpha _{T} } \quad
\textrm{a.s.},
\end{equation}
whence, as $t\to \infty $,
\begin{equation*}
\log {N(t)} = \log \frac{t}{\alpha _{T} } + o(1) \quad \textrm{a.s.}
\end{equation*}
In what follows $o(1)$ is a random function which converges to zero a.s.~as
$t\to \infty $. Plugging the above representations into \eqref{f15}, we
obtain
%
\begin{equation}\label{f16}
\limsup _{t\rightarrow \infty } \frac{r_{0}(R_{0}^{-1}(\log \frac{t}{\alpha
_{T} } + o(1)))(Z_{N(t)}-R_{0}^{-1}(\log \frac{t}{\alpha _{T} } +
o(1)))}{L_{2} (\frac{t}{\alpha _{T} }(1 + o(1)) )}=1 \quad \textrm{a.s.}
\end{equation}
Further, by the slow variation of $L_{2}$ we can replace the denominator in
\eqref{f16} by $L_{2} (t)$.

Let us recall that under the assumptions of Theorem~\ref{thm:main1}, the
function $\hat{r}_{0}(x) = (R_{0}^{-1}(x))'$ is regularly varying at
infinity. So using once again Lemma~\ref{l2}, we obtain the equalities
\begin{equation*}
\hat{r}_{0}\biggl(\log \frac{t}{\alpha _{T} } + o(1)\biggr)= \hat{r}_{0}\biggl(\log
\frac{t}{\alpha _{T} } \biggr)(1+ o(1))= \frac{1 + o(1)}{r_{0}(R_{0}^{-1}(\log
\frac{t}{\alpha _{T} }))},\quad t\to \infty ,
\end{equation*}
and
\begin{equation*}
R_{0}^{-1}\biggl(\log \frac{t}{\alpha _{T} } + o(1)\biggr) -R_{0}^{-1}\biggl(\log
\frac{t}{\alpha _{T} }\biggr)= o(1) \hat{r}_{0}\biggl(\log \frac{t}{\alpha _{T} } +
o(1)\biggr),\quad t\to \infty .
\end{equation*}
Combining the above relations, from \eqref{f16} we derive
%
\begin{equation}\label{f17}
\limsup _{t\rightarrow \infty } \frac{r_{0}(A_{0}(t))(Z_{N(t)}-A_{0}(t)
)}{L_{2} (t)}=1 \quad \textrm{a.s.},
\end{equation}
with $A_{0}(t)$ and $r_{0}(t)$ as in Theorem~\ref{thm:main1}. The same
argument with $N(t)$ replaced by $N(t) + 1$ yields
\begin{equation*}
\limsup _{t\rightarrow \infty } \frac{r_{0}(A_{0}(t))(Z_{N(t)+1}-A_{0}(t)
)}{L_{2} (t)}=1 \quad \textrm{a.s.}
\end{equation*}
It remains to note that
\begin{equation*}
Z_{N(t)} \leq \bar{X}(t) \leq Z_{N(t) + 1} \quad \textrm{a.s.}
\end{equation*}
Summarizing this gives equality \eqref{f2}. The proof of the relation
\eqref{f3} utilizes equality \eqref{f9} of Lemma~\ref{l3} and is similar. We
omit the details.
\end{proof}

The derivation of Theorem~\ref{thm:main2} is based on Lemma~\ref{l4} and
basically repeats the proof of Theorem~\ref{thm:main1}. We leave the details
to a reader.

Suppose that under the assumptions of Theorem~\ref{thm:main1} the parameter
$t$ runs over a countable set $t\in \mathcal{T}:= \{t_{0} =0 < t_{1} < t_{2}
< \cdots \}$ such that $\lim _{i\to \infty }t_{i} =+\infty $ as $i \to \infty
$. The set $\mathcal{T}$ can be random and the process $X$ can depend on
$\mathcal{T}$. Assume that $\mathbb{P}(S_{i} \in \mathcal{T})=1 $ for all
$i\in \mathbb{N}$.

Put $X_{i}:=X(t_{i} )$ and $\bar{X}_{n} = \max _{0 \leq i \leq n}X_{i}$.
Assume that extreme values of the process $X$ are attained at the points of
the set $\mathcal{T}$. More precisely, for all $t\geq 0$,
%
\begin{equation}\label{f18}
\sup _{0 \leq s \leq t}X(s) = \max _{0 \leq t_{i} \leq t}X_{i} \quad
\textrm{a.s.}
\end{equation}

In what follows the next proposition will be useful.

\begin{prop}\label{p1}
Under the assumptions of Theorem~\ref{thm:main1} suppose that there exists a
set $\mathcal{T}$ such that condition \eqref{f18} holds and, further,
%
\begin{equation}\label{f19}
\lim _{n\rightarrow \infty } \frac{t_{n}}{n}=\alpha \quad \textrm{a.s.}
\end{equation}
Then
%
\begin{equation}\label{f20}
\limsup _{t\rightarrow \infty } \frac{ r_{0} (A (n)) (\bar{X}_{n} -A
(n))}{L_{2} (n) }=1 \quad \textrm{a.s.},
\end{equation}
and
%
\begin{equation}\label{f21}
\liminf _{n\rightarrow \infty } \frac{ r_{0} (A (n)) (\bar{X}_{n} -A
(n))}{L_{3} (n) }=-1 \quad \textrm{a.s.},
\end{equation}
where
\begin{equation*}
A{(n)}=R_{0}^{-1}\left (\log {\frac{\alpha n}{\alpha _{T}}}\right ).
\end{equation*}
\end{prop}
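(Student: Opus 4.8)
The plan is to reduce everything to the i.i.d.\ cycle maxima $Z_m = \max_{1 \le k \le m} Y_k$ already used in the proof of Theorem~\ref{thm:main1}, for which Lemma~\ref{l3} supplies the asymptotics \eqref{f8} and \eqref{f9}. First I would record the deterministic sandwich that replaces the one used there for $\bar X(t)$. Condition \eqref{f18} applied at $t=t_n$ gives $\bar X_n = \max_{0\le t_i \le t_n} X_i = \sup_{0\le s\le t_n} X(s)$, since $t_i \le t_n \iff i\le n$. As the regeneration epoch $S_{N(t_n)}$ satisfies $S_{N(t_n)} \le t_n < S_{N(t_n)+1}$, the same inclusion argument as in Theorem~\ref{thm:main1} yields
$$ Z_{N(t_n)} \le \bar X_n \le Z_{N(t_n)+1} \qquad \text{a.s.} $$
Thus it suffices to prove the $\limsup$ and $\liminf$ relations for the two flanking sequences with the normalizations $r_0(A(n))$, $A(n)$, $L_2(n)$, $L_3(n)$.

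The crucial point, and the one I expect to be the main obstacle, is that passing from the index $m$ in \eqref{f8}--\eqref{f9} to the random index $N(t_n)$ must \emph{not} change the value of the $\limsup$/$\liminf$. Here the hypothesis $\mathbf{P}(S_i\in\mathcal{T})=1$ is essential: since all regeneration epochs lie in $\mathcal{T}$, no point of $\mathcal{T}$ is skipped, so between consecutive points $t_{n-1},t_n$ there is at most one regeneration epoch, i.e.\ $N(t_n)-N(t_{n-1})\in\{0,1\}$. As $N(t_n)\to\infty$ a.s.\ (by \eqref{f160}) while increasing in unit steps, the sequence $(N(t_n))_n$ eventually attains every integer, whence $\limsup_n g(N(t_n)) = \limsup_m g(m)$ and likewise for $\liminf$, for any sequence $(g(m))$. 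Applying this to \eqref{f8} with $g(m)=r_0(a_0(m))(Z_m-a_0(m))/L_2(m)$, $a_0(m)=R_0^{-1}(\log m)$, and to \eqref{f9}, gives the corresponding relations with $m$ replaced by $N(t_n)$; the same holds with $N(t_n)+1$ in place of $N(t_n)$.

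It remains to convert the normalization from $a_0(N(t_n))$, $L_2(N(t_n))$ into $A(n)$, $L_2(n)$ (and $L_3$). Combining \eqref{f160} with \eqref{f19} gives $N(t_n)/n\to\alpha/\alpha_T$ a.s., whence $\log N(t_n) = \log(\alpha n/\alpha_T) + o(1)$ and, by slow variation of $\log\log$ (and $\log\log\log$), $L_2(N(t_n)) = L_2(n)(1+o(1))$ and $L_3(N(t_n)) = L_3(n)(1+o(1))$. I would then repeat the regular-variation computation from the proof of Theorem~\ref{thm:main1}: since $\hat r_0=(R_0^{-1})'$ is regularly varying, Lemma~\ref{l2} gives $r_0(a_0(N(t_n))) = r_0(A(n))(1+o(1))$ and $a_0(N(t_n)) - A(n) = o(1)\,\hat r_0(\log(\alpha n/\alpha_T)+o(1)) = o(1)/r_0(A(n))$, the last quantity being negligible against $L_2(n)/r_0(A(n))$ and $L_3(n)/r_0(A(n))$. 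Feeding these substitutions into the relations for $Z_{N(t_n)}$ and $Z_{N(t_n)+1}$ and invoking the sandwich yields \eqref{f20} and \eqref{f21}. The $\liminf$ case uses \eqref{f9} in place of \eqref{f8} and is otherwise identical; no analogue of conditions \eqref{f154}--\eqref{f155} is needed, as we are under the continuous hypotheses of Theorem~\ref{thm:main1}.
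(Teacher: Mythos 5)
Your proposal is correct and follows essentially the same route as the paper's proof: the sandwich $Z_{N(t_n)} \le \bar{X}_n \le Z_{N(t_n)+1}$ obtained from \eqref{f18}, substitution of the random index $N(t_n)$ into Lemma~\ref{l3} (justified, as you note, by $\mathbf{P}(S_i\in\mathcal{T})=1$ forcing $N(t_n)$ to increase in unit steps through all large integers), the limit $N(t_n)/n\to\alpha/\alpha_T$ from \eqref{f160} and \eqref{f19}, and the same regular-variation conversion of the normalizing functions as in Theorem~\ref{thm:main1}. In fact you spell out two points the paper leaves implicit (why $N(t_n)$ attains every large integer, and the explicit replacement of $a_0(N(t_n))$, $L_2(N(t_n))$ by $A(n)$, $L_2(n)$), so no gaps remain.
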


\begin{proof}
A proof given below is similar to the proof of Theorem~\ref{thm:main1}. From
equations \eqref{f160} and \eqref{f19} we obtain, as $n\to \infty $,
%
\begin{equation}\label{f22}
\frac{N(t_{n})}{n}= \frac{N(t_{n})}{t_{n}} \frac{t_{n}}{n } \rightarrow
\frac{\alpha }{\alpha _{T} } \quad \textrm{a.s.}
\end{equation}
Further, replacing $n$ by $N(t_{n})$ in equality \eqref{f14}, which is
possible because $N(t_{n})$ diverges to infinity through all positive
integers, we get
%
\begin{equation}\label{f23}
\limsup _{n\rightarrow \infty } \frac{r_{0}(R_{0}^{-1}(\log
N(t_{n})))(Z_{N(t_{n})}-R_{0}^{-1}(\log N(t_{n})))}{L_{2} (N(t_{n}))}=1 \quad
\textrm{a.s.}
\end{equation}
Directly from \eqref{f18} we derive
\begin{equation*}
Z_{N(t_{n})} \leq \bar{X}_{n} \leq Z_{N(t_{n}) + 1} \quad \textrm{a.s.}
\end{equation*}
These inequalities and relations \eqref{f22}, \eqref{f23} yield equality
\eqref{f20}. The same argument can be applied for proving \eqref{f21}.
\end{proof}

\section{Applications}\label{sec:appl}

\begin{example}[Queuing system $GI/G/1$]\label{example1}
Let us consider a single-channel queuing system with customers arriving at
$0=t_{0}<t_{1}<t_{2}<\cdots < t_{i}<\cdots $. Let $0=W_{0},W_{1},W_{2},
\ldots , W_{i}, \ldots $ be the actual waiting times of the customers. Thus,
at time $t=0$ a first customer arrives and the service starts immediately.
Denote by $\zeta _{i} = t_{i} - t_{i-1}$, for $i\in \mathbb{N}$, the
interarrival times between successive customers, and $\eta _{i}$, $i\in
\mathbb{N}$, is the service time of the $i$-th customer. Suppose that $(\zeta
_{i} )$ and $(\eta _{i})$ are independent sequences of \xch{i.i.d.}{i.i.d}
random variables. In the standard notation, this queuing system\index{queuing system} has the type
$GI/G/1$, see \cite{gk1,kar1}.

Let $W(t)$ be the waiting time of the last customer in the queue at time
$t\geq 0$, that is,
\begin{equation*}
W(t) ={W}_{\nu (t)},\quad \text{where }\nu (t)= \max (k\geq 0: t_{k} \leq t),
\end{equation*}
and
\begin{equation*}
W(t_{n}) ={W}(t_{n} + ) = W_{n}.
\end{equation*}
Set
\begin{equation*}
\bar{W}(t) = \sup _{0 \leq s \leq t}W(s) = \max _{0 \leq t_{k} \leq t} W_{k}
,
\end{equation*}
then
\begin{equation*}
\bar{W}_{n} = \max _{1\leq i \leq n}W_{i} = \bar{W}(t_{n}).
\end{equation*}
Denote $\mathbf{E}\zeta _{i} =a$, $\mathbf{E}\eta _{i} =b$ and assume that
both expectations are finite. Further, we impose the following conditions on
$\zeta _{i}$ and $\eta _{i}$:
%
\begin{equation}\label{f24}
\rho : = \frac{b}{a} <1
\end{equation}
and for some $\gamma > 0 $, it holds
%
\begin{equation}\label{f25}
\mathbf{E}\exp (\gamma (\eta _{i} - \zeta _{i})) =1, \qquad \mathbf{E}( \eta
_{i} - \zeta _{i})\exp (\gamma (\eta _{i} - \zeta _{i}))< \infty .
\end{equation}
Under these assumptions the evolution of the queuing system\index{queuing system} can be decomposed
into busy periods, when a customer is in service, interleaved by idle
periods, when the system is empty. Let us introduce regeneration moments
$(S_{k})$ of the process $W$ as follows: $S_{0} =0$ and, for $i\in
\mathbb{N}$, $S_{i}$ is the arrival time of a new customer at the end of
$i$-th idle period. Let $T_{i} $ be the duration of the $i$-th regeneration
cycle, and $\bar{W}(T_{1})$ be the maximum waiting time during the first
regeneration cycle. It is known, see \cite{asm} and \cite{igl}, that under
conditions \eqref{f24} and \eqref{f25}, we have
\begin{equation*}
\mathbf{P}(\bar{W}(T_{1}) > x) = (C+o(1)) \exp (-\gamma x) , \quad x
\rightarrow \infty .
\end{equation*}
Condition \eqref{f24} also guarantees that the average duration of the $i$-th
regeneration cycle is finite, that is, $\alpha _{T} = \mathbf{E}T_{i} <
\infty $, see \cite[Chapter 14, \S 3, Theorem 3.2]{kar1}.

Thus, if we set $X(t)={W}(t)$, $R_{0} (x)= \gamma x$, $R_{1} (x)= \log C +
o(1)$ and $r_{0} (x)= \gamma $, then from Theorem~\ref{thm:main1} and
Proposition \ref{p1} we derive the corollary.

\begin{cor}\label{n1}
Assume that the queuing system\index{queuing system} $GI/G/1$ satisfies conditions \eqref{f24} and
\eqref{f25}. Then
%
\begin{equation}\label{f26}
\limsup _{t\rightarrow \infty } \frac{ \gamma \bar{W}(t) -\log t}{L_{2} (t)
}= \limsup _{n\rightarrow \infty } \frac{\gamma \bar{W}_{n} -\log n}{L_{2}
(n) }=1 \quad \textrm{a.s.},
\end{equation}
and
%
\begin{equation}\label{f27}
\liminf _{t\rightarrow \infty } \frac{ \gamma \bar{W}(t) -\log t}{L_{3} (t)
}= \liminf _{n\rightarrow \infty } \frac{ \gamma \bar{W}_{n} -\log n}{L_{3}
(n) }=-1 \quad \textrm{a.s.}
\end{equation}
\end{cor}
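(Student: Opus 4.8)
The plan is to obtain both pairs of equalities as direct specializations of results already in hand: the limits in the continuous parameter $t$ follow from Theorem~\ref{thm:main1}, and their counterparts indexed by the customer number $n$ follow from Proposition~\ref{p1}. The only real work is to verify the hypotheses for the concrete data $X(t)=W(t)$, $R_0(x)=\gamma x$, $r_0(x)\equiv\gamma$, and then to simplify the normalizations; the constancy of $r_0$ makes the last step essentially trivial.

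First I would check condition $(\mathbb{U})$ for $R_0(x)=\gamma x$. Its inverse is $R_0^{-1}(y)=y/\gamma$, so $\hat h(x)=(R_0^{-1}(x))'=1/\gamma$ is constant, hence regularly varying at $+\infty$ with index $\kappa=0$; the remaining requirements ($R_0(x)\to+\infty$, $R_0$ nondecreasing and differentiable) are immediate. For the bounded remainder, the tail asymptotics recalled above give $R(x)=-\log(1-F(x))=-\log\bigl((C+o(1))e^{-\gamma x}\bigr)=\gamma x-\log C+o(1)$, so $R_1(x)=R(x)-\gamma x$ satisfies \eqref{f150}. Finally $\alpha_T<\infty$ is guaranteed by the stability condition \eqref{f24}. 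Thus all hypotheses of Theorem~\ref{thm:main1} hold.

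Substituting into \eqref{f2}, with $A_0(t)=R_0^{-1}(\log(t/\alpha_T))=\gamma^{-1}\log(t/\alpha_T)$ and $r_0(A_0(t))=\gamma$, gives
\[
\limsup_{t\to\infty}\frac{\gamma\bar W(t)-\log t+\log\alpha_T}{L_2(t)}=1\quad\textrm{a.s.}
\]
Since $\log\alpha_T$ is a fixed constant and $L_2(t)=\log\log t\to\infty$, the summand $\log\alpha_T/L_2(t)\to0$ and may be discarded, yielding the first equality in \eqref{f26}; the first equality in \eqref{f27} comes from \eqref{f3} identically, with $L_3$ in place of $L_2$.

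For the $n$-indexed equalities I would invoke Proposition~\ref{p1} with $\mathcal{T}$ taken to be the set of arrival epochs $\{t_k\}$. Condition \eqref{f18} holds because $W(\cdot)$ is piecewise constant, changing only at arrivals, so its running supremum is attained at the $t_k$; moreover the regeneration epochs $S_i$ are themselves arrival instants, so $\mathbf{P}(S_i\in\mathcal{T})=1$. Condition \eqref{f19} holds with $\alpha=a=\mathbf{E}\zeta_1$ by the strong law of large numbers applied to $t_n=\zeta_1+\cdots+\zeta_n$. Then $A(n)=\gamma^{-1}\log(an/\alpha_T)$, $r_0(A(n))=\gamma$, and $\bar X_n=\bar W_n$ (since $W_0=0$), so after absorbing the constant $\log(a/\alpha_T)$ into the diverging $L_2(n)$, relation \eqref{f20} reduces to the second equality in \eqref{f26}, and \eqref{f21} gives the second equality in \eqref{f27} the same way. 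There is no genuine obstacle: the statement is an application, and the only points needing care are the verification of \eqref{f18} for the waiting-time process and the observation --- clean precisely because $r_0$ is constant --- that the additive constants in the numerators vanish once divided by the diverging normalizers $L_2$ and $L_3$.
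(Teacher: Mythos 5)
Your proposal is correct and follows exactly the paper's route: the paper likewise sets $X(t)=W(t)$, $R_0(x)=\gamma x$, $R_1(x)=\log C+o(1)$, $r_0\equiv\gamma$, and derives the $t$-indexed limits from Theorem~\ref{thm:main1} and the $n$-indexed ones from Proposition~\ref{p1} with $\mathcal{T}$ the arrival epochs. You have simply filled in the routine verifications (condition $(\mathbb{U})$, condition \eqref{f18}, the SLLN for \eqref{f19}, and the absorption of the additive constants into the diverging normalizers) that the paper leaves implicit.
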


\begin{rem}\label{z1}
\begin{itemize}
\item[(i)] Suppose that
\begin{equation*}
\mathbf{P}(\zeta _{i} \leq x) = 1 - \exp (-\lambda x),\qquad
\mathbf{P}(\eta _{i} \leq x) = 1 - \exp (-\mu x), \quad x \geq 0,
\end{equation*}
that is, we consider the queuing system\index{queuing system} $M/M/1$. Assume further, that $\rho
:= \lambda /\mu <1$. It is easy to check that conditions \eqref{f24} and
\eqref{f25} are fulfilled, and therefore equalities \eqref{f26} and
\eqref{f27} hold with $ \gamma = \mu - \lambda = \mu (1-\rho )$.
\item[(ii)] Suppose that
\begin{equation*}
\mathbf{P}(\zeta _{i} \leq x) = 1 - \exp (-\lambda x) , \quad x\geq 0,
\end{equation*}
and $\mathbf{P}(\eta _{i} = const = d)=1$. Assume further, that $\rho : =
\lambda d < 1$. Then relations \eqref{f24}--\eqref{f27} hold with $\gamma =
x_{\rho }/d$, with $x_{\rho }>0 $ being the unique positive root of the
equation
\begin{equation*}
e^{x} = 1 + \frac{x}{\rho }.
\end{equation*}
\end{itemize}
\end{rem}
\end{example}

\begin{example}[Queuing system $M/M/m$]\label{example2}
Let us now consider a queuing system\index{queuing system} with $m$ servers and customers which
arrive according to the Poisson process with intensity $\lambda $, and
service times being independent copies of a random variable $\eta $ with
an exponential distribution
\begin{equation*}
\mathbf{P}(\eta \leq x) = 1 - \exp (-\mu x),\quad x\geq 0.
\end{equation*}
In the standard notation, this queuing system\index{queuing system} has the type $M/M/m$, see
\cite{gk1,kar1}.

We impose the following assumption on the parameters $\lambda $ and $\mu $
ensuring existence of the stationary regime:
%
\begin{equation}\label{f30}
\rho : = \frac{\lambda }{m \mu } <1 .
\end{equation}
For $t\geq 0$, let $Q(t)$ denote the length of the queue at time $t$, that
is, the total number of customers in service or pending. Set
\begin{equation*}
\bar{Q}(t) = \sup _{0\leq s < t} Q(s),\quad t\geq 0.
\end{equation*}
In the same way as in Example~\ref{example1}, one can introduce regeneration
moments $(S_{k})$ for the process $Q$: $S_{0}:=0$ and, for $i\in \mathbb{N}$,
$S_{i}$ is the arrival time of a new customer after the $i$-th busy period.
Let $T_{i} $ be the duration of the $i$-th regeneration cycle and
$\bar{Q}(T_{1})$ be the maximum length of the queue in the first regeneration
cycle. Put
%
\begin{equation}\label{f31}
\mathbf{P}(\bar{Q}(T_{1}) >x) = \exp (-R(x)).
\end{equation}
In recent paper \cite{do_m} the authors established that function $R$ in
\eqref{f31} satisfies conditions \eqref{f152} and \eqref{f153} with
\begin{equation*}
R_{0} (x)= -x \log \rho , \qquad r_{0} (x)= -\log \rho .
\end{equation*}
Using Theorem~\ref{thm:main2} we infer

\begin{cor}\label{n2}
Assume that for a queuing system\index{queuing system} ${M/M/m}$, $1 \leq m < \infty $, the
condition \eqref{f30} is fulfilled. Then
%
\begin{equation}\label{f28}
\limsup _{t\rightarrow \infty } \frac{\bar{Q}(t)\log \frac{1}{\rho }-\log
{t}}{L_{2} (t) }=1\quad \textrm{a.s.},
\end{equation}
and
%
\begin{equation}\label{f29}
\liminf _{t\rightarrow \infty } \frac{\bar{Q}(t)\log \frac{1}{\rho }-\log
{t}}{L_{3} (t) }=-1,\quad \textrm{a.s.}
\end{equation}
\end{cor}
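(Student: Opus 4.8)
The plan is to verify that the $M/M/m$ queue-length process $Q$ meets every hypothesis of Theorem~\ref{thm:main2} and then to read off the two displayed relations after a cosmetic simplification of the centering. First I would record the structural facts. The process $Q$ takes values in $\{0,1,2,\ldots \}$, so \eqref{f151} holds, and the regenerative structure with cycles $(S_k)$ is exactly the one described above \eqref{f31}. The decomposition \eqref{f152}--\eqref{f153} with $R_0(x)=-x\log \rho $ and a bounded remainder $R_1$ is supplied by \cite{do_m}, so this step is quotation rather than computation; moreover $R_0$ is already a smooth function on all of $\mathbb{R}$, so the smooth-extension convention of the Remark following Theorem~\ref{thm:main2} is satisfied automatically.

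Next I would check condition $(\mathbb{U})$ and the finiteness of $\alpha _T$. Since $\rho <1$, the slope $-\log \rho =\log (1/\rho )$ is strictly positive, so $R_0(x)=x\log (1/\rho )$ is linear, increasing, differentiable, and tends to $+\infty $. Its inverse is $R_0^{-1}(x)=x/\log (1/\rho )$, whence $\hat{h}(x)=(R_0^{-1}(x))'=1/\log (1/\rho )$ is a positive constant; a constant is regularly varying with index $\kappa =0$, so $(\mathbb{U})$ holds. The condition $\alpha _T=\mathbf{E}T_1<\infty $ follows from $\rho <1$ by the same finiteness argument for regeneration cycles invoked in Example~\ref{example1}.

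The growth conditions \eqref{f154} and \eqref{f155} are where one might expect friction, but here they are immediate: $r_0(x)\equiv -\log \rho $ is constant, so $r_0(R_0^{-1}(x))=\log (1/\rho )$ is bounded and is therefore both $o(\log x)$ and $o(\log \log x)$ as $x\to \infty $. Hence both parts (i) and (ii) of Theorem~\ref{thm:main2} apply, and with $A_0(t)=R_0^{-1}(\log (t/\alpha _T))=(\log t-\log \alpha _T)/\log (1/\rho )$ and $r_0(A_0(t))=\log (1/\rho )$ they give
\begin{equation*}
\limsup _{t\to \infty }\frac{\log (1/\rho )(\bar{Q}(t)-A_0(t))}{L_2(t)}=1,\qquad
\liminf _{t\to \infty }\frac{\log (1/\rho )(\bar{Q}(t)-A_0(t))}{L_3(t)}=-1
\end{equation*}
almost surely.

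Finally I would simplify the centering. Multiplying out, $\log (1/\rho )(\bar{Q}(t)-A_0(t))=\bar{Q}(t)\log (1/\rho )-\log t+\log \alpha _T$, so the numerators in the two displays differ from those in \eqref{f28}--\eqref{f29} only by the additive constant $\log \alpha _T$. Since $L_2(t)=\log \log t\to \infty $ and $L_3(t)=\log \log \log t\to \infty $, dividing by either annihilates the constant, and \eqref{f28} and \eqref{f29} follow. The only genuine obstacle in the whole argument is external---establishing the decomposition $R_0(x)=-x\log \rho $ with a bounded remainder---and that has already been carried out in \cite{do_m}; everything downstream is bookkeeping driven by the fact that $R_0$ is linear and $r_0$ constant.
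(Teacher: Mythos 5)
Your proposal is correct and follows essentially the same route as the paper: quote \cite{do_m} for the decomposition with $R_0(x)=-x\log \rho $ and bounded remainder, verify the hypotheses of Theorem~\ref{thm:main2}, and apply it. You are in fact more explicit than the paper in checking condition $(\mathbb{U})$, conditions \eqref{f154}--\eqref{f155}, and in absorbing the constant $\log \alpha _T$ from the centering $A_0(t)$, all of which the paper leaves implicit.
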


\begin{rem}\label{z2}
Relations \eqref{f28} and \eqref{f29} have been proved in \cite{do_m} by
direct calculations. Let us note that in case $m=\infty $, which has also
been treated in \cite{do_m}, the asymptotics of $\bar{Q}(t)$ is of completely
different form, see also \cite{mi_19}.
\end{rem}
\end{example}

\begin{example}[Birth and death processes]\label{example3}
Let $X=(X(t))_{t\geq 0}$ be a birth and death processes with parameters
%
\begin{equation}\label{f290}
\lambda _{n} = \lambda n +a, \qquad \mu _{n} = \mu n , \quad \lambda , \mu ,a
>0,\  n=0, 1, 2, \ldots ,
\end{equation}
see \cite[Ch. 7, $\S 6 $]{kar1}. That is, $(X(t))_{t\geq 0}$ is a
time-homogeneous Markov process such that, for $t\geq 0$, given $\{X(t)=n\}$
the probability of transition to state $n + 1$ over a small period of time
$\delta $ is $(\lambda n +a)\delta +o(\delta )$, $n=0,1,2,3,\ldots{} $, and
the probability of transition to $n - 1$ is $\mu n \delta +o(\delta )$,
$n=1,2,3,\ldots{} $. The parameter $a$ can be interpreted as the
infinitesimal intensity of population growth due to immigration. The
birth--death process $X$ is usually called the process with linear growth and
immigration.

We assume that $X(0)=0$ and
%
\begin{equation}\label{f32}
\rho := \frac{\lambda }{ \mu } <1 .
\end{equation}
Put
\begin{equation*}
\theta _{0} = 1, \qquad \theta _{k} = \prod _{i=1}^{k} \frac{\lambda
_{i-1}}{\mu _{i}} , \quad k\in \mathbb{N} .
\end{equation*}
It is not difficult to check that condition \eqref{f32} ensures
%
\begin{equation}\label{f33}
\sum _{k\geq 1} \theta _{k} < \infty ,
\end{equation}
and
%
\begin{equation}\label{f34}
\sum _{k\geq 1} \frac{1}{\lambda _{k} \theta _{k}} = \infty .
\end{equation}
Under conditions \eqref{f33} and \eqref{f34}, see \cite{kar1} and
\cite{kar2}, there exists a stationary regime, that is,
\begin{equation*}
\lim _{t \rightarrow \infty }\mathbf{P}( X(t)= k) = p_{k} ,
\end{equation*}
with
%
\begin{equation}\label{f134}
p_{k} = \theta _{k} p_{0},\quad k=0,1,2,3,\ldots , \quad \text{where } p_{0}
= \Biggl(\sum _{k= 0}^{\infty } \theta _{k} \Biggr)^{-1}.
\end{equation}

Further, $X$ is a regenerative process\index{regenerative processes} with regeneration moments $(S_{k})$,
where\break $S_{0} =0$ and $S_{i}$, $i\in \mathbb{N}$, is the moment of $i$-th
return to state $0$. It is known that
\begin{equation*}
\mathbf{E}T_{k} = \frac{1 }{(\lambda _{0}+\mu _{0})p_{0} }= \frac{1 }{a p_{0}
} ,
\end{equation*}
where $T_{k} =S_{k} - S_{k-1}$ is the duration of the $k$-th regeneration
cycle, see Eq. (32) in \cite{ok_ik}. If \eqref{f32} holds, then
\begin{equation*}
M(t) := \mathbf{E} X(t)\rightarrow \frac{a}{\mu - \lambda },\quad t \to
\infty ,
\end{equation*}
see \cite{kar1}. We are interested in the asymptotic behavior of extreme
values
\begin{equation*}
\bar{X}(t) = \sup _{0\leq s < t} X(s), \quad t\geq 0.
\end{equation*}
Let us show how to apply Theorem~\ref{thm:main2} to the asymptotic analysis
of $\bar{X}(t)$. Firstly, we need to evaluate accurately the sequence
$(R(n))$ defined by
\begin{equation*}
q(n): = \mathbf{P} ( \bar{X}(T_{1}) > n ) = \exp (-R(n)).
\end{equation*}
It is known, see \cite{asm} or Eq. (34) in \cite{ok_ik}, that
%
\begin{equation}\label{f35}
q(n) = \frac{1}{ \sum _{k=0}^{n} \alpha _{k} },
\end{equation}
where $\alpha _{0} = 1$ and $\alpha _{k} = \prod _{i=1}^{k} \frac{\mu
_{i}}{\lambda _{i}}$ for $k\in \mathbb{N}$.

Using equalities \eqref{f290} and \eqref{f32} we can rewrite $\alpha _{k}$ as
follows:
%
\begin{equation}\label{f36}
\alpha _{k} = \frac{\beta _{k}}{\rho ^{k} } , \qquad \beta _{k} = \prod
_{i=1}^{k} \left (1 - \frac{1}{1+i\lambda /a}\right ).
\end{equation}
Further, using Taylor's expansion
\begin{equation*}
\log (1 + x) = x - \frac{x^{2}}{2} + \frac{x^{3}}{3} - \cdots , \quad |x| <
1,
\end{equation*}
we have
%
\begin{equation}\label{f37}
\log \beta _{k} = \sum _{i=1}^{k} \log \left (1 - \frac{1}{1+i\lambda
/a}\right ) = - \sum _{i=1}^{k} \frac{1}{1+i\lambda /a} + d_{k} ,
\end{equation}
where $d_{k}$ has a finite limit, as $k\to \infty $. Combining the relation
%
\begin{equation}\label{f39}
\left |\sum _{i=1}^{k} \frac{1}{1+i\lambda /a} - \sum _{i=1}^{k}
\frac{1}{i\lambda /a} \right | = \sum _{i=1}^{k} \frac{a}{\lambda i
(1+i\lambda /a)} =C_{1} +o(1),\quad k\to \infty ,
\end{equation}
and the fact
%
\begin{equation}\label{f38}
\lim _{n\rightarrow \infty } \sum _{i=1}^{n} \frac{1}{i} - \log n = \gamma ,
\end{equation}
with $\gamma =0.577\ldots $ being the Euler--Mascheroni constant, we conclude
\begin{equation*}
\sum _{i=1}^{k} \log \left (1 - \frac{1}{1+i\lambda /a}\right ) = -
\frac{a}{\lambda } \log k + C_{2} + o(1),\quad k\to \infty .
\end{equation*}
Therefore,
%
\begin{equation}\label{f40}
\beta _{k} =C k^{-a/\lambda } (1+o(1)) ,
\end{equation}
where
%
\begin{equation}\label{f140}
C=e^{C_{2}}:=\lim _{n\rightarrow \infty } n^{a/\lambda } \prod _{i=1}^{n} \biggl(1
- \frac{1}{1+i\lambda /a}\biggr).
\end{equation}
Now we can apply Lemma~\ref{l5} to obtain
\begin{equation*}
\Lambda _{n} = \sum _{k=1}^{n} \frac{\rho ^{-k}}{k^{a/\lambda }} = \frac{\rho
^{-n-1}}{(\frac{1}{\rho }-1)n^{a/\lambda }} (1+o(1)), \quad n\to \infty .
\end{equation*}
Taking into account equality \eqref{f40}, we obtain
\begin{equation*}
\sum _{k=0}^{n} \alpha _{k} = \frac{C \rho ^{-n-1}}{(1/\rho -1)n^{a/\lambda
}} (1+o(1)),\quad n\to \infty .
\end{equation*}
Thus,
%
\begin{equation}\label{f53}
q(n) = \frac{ 1/\rho -1}{C } \rho ^{n+1} n^{a/\lambda }(1+o(1)), \quad n\to
\infty ,
\end{equation}
and we have the following representation
\begin{equation*}
R(n)= -\log q(n) = R_{0} (n) + R_{1} (n),
\end{equation*}
where
\begin{equation*}
R_{0} (n) = -n \log \rho - \frac{a}{\lambda } \log n , \qquad R_{1} (n)
= -\log \frac{ 1/\rho -1}{C }-\log \rho + o(1),\quad n\to \infty .
\end{equation*}
The function $ R_{0} (x) = -x \log \rho - \frac{a}{\lambda } \log x $ is
increasing for $x \geq x_{0} = -\frac{a}{\lambda \log \rho }$ and,
furthermore, satisfies condition $(\mathbb{U})$.

Since $r_{0} (x) = -\log \rho - \frac{a}{\lambda x}$, conditions
\eqref{f154}, \eqref{f155} of Theorem~\ref{thm:main2} hold.

It remains to find a simple formula for the function $A_{0}$ in
\eqref{f200} and \eqref{f300}. To this end, let us write
\begin{equation*}
\log \left (\frac{t}{\alpha _{T}}\right )= R_{0}(A_{0} (t)) = -A_{0} (t)
\log \rho - \frac{a}{\lambda } \log A_{0} (t)=A_{0}(t)\left (-\log
\rho +o(1)\right ),
\end{equation*}
as $t\to \infty $. Upon taking logarithms we get
\begin{equation*}
\log A_{0} (t) = \log \log \left (\frac{t}{\alpha _{T}}\right ) +O(1),
\quad t\to \infty ,
\end{equation*}
and plugging this back into the initial equality yields
\begin{equation*}
A_{0} (t) =
\frac{\log {t} + \frac{a}{\lambda }L_{2} (t) + O(1)}{-\log \rho }.
\end{equation*}
Thus, from Theorem~\ref{thm:main2} we infer the following.
%
\begin{cor}\label{n3}
Let $(X(t))_{t\geq 0}$ be the birth and death process with parameters
$\lambda _{n} = \lambda n +a$, $\mu _{n} = \mu n$, where $\lambda ,\mu ,
a>0$, $n=0, 1, 2,3,\ldots{}$. Suppose also that \eqref{f32} holds. Then
%
\begin{equation}
\label{f51}
\limsup _{t\rightarrow \infty }
\frac{\bar{X}(t)\log \frac{1}{\rho }-\log t}{L_{2} (t) }=1+
\frac{a}{\lambda },\quad \textrm{a.s.},
\end{equation}
and
%
\begin{equation}
\label{f52}
\liminf _{t\rightarrow \infty }
\frac{\bar{X}(t)\log \frac{1}{\rho }-\log t-\frac{a}{\lambda }L_{2} (t)}{L_{3} (t) }=-1,
\quad \textrm{a.s.}
\end{equation}
\end{cor}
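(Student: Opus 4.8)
The plan is to invoke Theorem~\ref{thm:main2} directly: all of its hypotheses have already been verified in the preceding discussion, where the decomposition $R(n)=R_{0}(n)+R_{1}(n)$ with $R_{0}(x)=-x\log\rho-\frac{a}{\lambda}\log x$ was exhibited, $R_{0}$ was shown to satisfy condition $(\mathbb{U})$, and the bounds \eqref{f154}, \eqref{f155} were checked using $r_{0}(x)=-\log\rho-\frac{a}{\lambda x}$. Hence \eqref{f200} and \eqref{f300} hold verbatim for $\bar{X}(t)$, and the remaining work is purely to rewrite these two relations in the explicit form \eqref{f51}, \eqref{f52} using the computed asymptotics of $A_{0}$.

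First I would replace the factor $r_{0}(A_{0}(t))$ by the constant $\log\frac{1}{\rho}$. Since $A_{0}(t)\to+\infty$ as $t\to\infty$ and $r_{0}(x)=-\log\rho-\frac{a}{\lambda x}$, we have $r_{0}(A_{0}(t))\to-\log\rho=\log\frac{1}{\rho}>0$. Writing the ratio in \eqref{f200} as $\frac{r_{0}(A_{0}(t))}{\log(1/\rho)}\cdot\frac{(\log\frac{1}{\rho})(\bar{X}(t)-A_{0}(t))}{L_{2}(t)}$ and noting that the first factor tends to $1$, a short argument (a convergent strictly positive prefactor may be pulled out of a $\limsup$ or a $\liminf$) turns \eqref{f200} and \eqref{f300} into
\begin{equation*}
\limsup_{t\to\infty}\frac{(\log\frac{1}{\rho})(\bar{X}(t)-A_{0}(t))}{L_{2}(t)}=1,\qquad
\liminf_{t\to\infty}\frac{(\log\frac{1}{\rho})(\bar{X}(t)-A_{0}(t))}{L_{3}(t)}=-1.
\end{equation*}

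Next I would substitute the explicit asymptotics $A_{0}(t)=\frac{\log t+\frac{a}{\lambda}L_{2}(t)+O(1)}{-\log\rho}$ derived above, which gives $(\log\frac{1}{\rho})A_{0}(t)=\log t+\frac{a}{\lambda}L_{2}(t)+O(1)$ and hence
\begin{equation*}
\Bigl(\log\tfrac{1}{\rho}\Bigr)\bigl(\bar{X}(t)-A_{0}(t)\bigr)=\Bigl(\log\tfrac{1}{\rho}\Bigr)\bar{X}(t)-\log t-\tfrac{a}{\lambda}L_{2}(t)-O(1).
\end{equation*}
For the $\limsup$ relation the error $O(1)/L_{2}(t)\to0$ is negligible, and since the denominator is exactly $L_{2}(t)$ the summand $-\frac{a}{\lambda}L_{2}(t)$ contributes an additive constant $-\frac{a}{\lambda}$; transferring it to the right-hand side yields \eqref{f51}. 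For the $\liminf$ relation the denominator is instead $L_{3}(t)=\log\log\log t$, and again $O(1)/L_{3}(t)\to0$; here, however, the term $\frac{a}{\lambda}L_{2}(t)$ cannot be absorbed, since $L_{2}(t)$ is of strictly larger order than $L_{3}(t)$, so it must be kept inside the numerator, producing exactly \eqref{f52}.

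The computations are routine; the one point requiring care is the different treatment of the correction $\frac{a}{\lambda}L_{2}(t)$ in the two cases, which is precisely the source of the asymmetry between \eqref{f51} and \eqref{f52}. In the law of the iterated logarithm this term is of the same order as the normalization $L_{2}(t)$ and merely shifts the constant from $1$ to $1+\frac{a}{\lambda}$, whereas in the law of the triple logarithm it dominates the normalization $L_{3}(t)$ and therefore has to remain as a centering term in the numerator.
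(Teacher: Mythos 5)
Your proof is correct and takes essentially the same route as the paper: the paper also obtains Corollary~\ref{n3} by applying Theorem~\ref{thm:main2} with the decomposition $R_0(x)=-x\log \rho -\frac{a}{\lambda }\log x$ and the asymptotics $A_0(t)=\bigl(\log t+\frac{a}{\lambda }L_2(t)+O(1)\bigr)/(-\log \rho )$ established in Example~\ref{example3}. The only difference is that you spell out the routine rewriting of \eqref{f200} and \eqref{f300} into \eqref{f51} and \eqref{f52} --- pulling out the convergent prefactor $r_0(A_0(t))\to \log \frac{1}{\rho }$ and noting that $\frac{a}{\lambda }L_2(t)$ shifts the constant in the iterated-logarithm law but must stay as a centering term in the triple-logarithm law --- which the paper leaves implicit.
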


\begin{rem}%
\label{z3}
It is clear from the above calculations that the condition $ X(0)=0$ can
be removed, that is, equations \eqref{f51} and \eqref{f52} hold true for
an arbitrary starting point $X(0)\in \{0,1,2,\ldots \}$.
\end{rem}

Let us finally mention without a proof a statement which follows easily
from equations \eqref{f134}, \eqref{f53} and Theorem 2 in
\cite{ok_ik}.

\begin{cor}%
\label{n4}
Let $(X(t))_{t\geq 0}$ be the birth and death process that satisfies all
conditions of Corollary \ref{n3}. Then
%
\begin{equation}
\lim _{n\rightarrow \infty }\mathbf{P}(\bar{X}(t_{n}^{*})\geq n) = 1-
\exp (-a p_{0} x),\quad x>0,
\end{equation}
where $t^{*}_{n} = C \rho ^{-n} n^{-a/ \lambda }x/(1/\rho -1)$, while
$p_{0}$ and $C$ are defined by \eqref{f134} and \eqref{f140}. This relation
can also be recast in a more transparent way as follows:
%
\begin{equation}
\lim _{n\to \infty }\mathbf{P}(C^{-1}(1/\rho -1)\rho ^{n} n^{a/
\lambda }X^{-1}(n)> x)=\exp (-a p_{0} x),
\end{equation}
where $X^{-1}(n)=\inf \{t\in \mathbb{R}:X(t)=n\}$ is the first time when
$(X(t))_{t\geq 0}$ visits state $n\in \mathbb{N}$, that is, the distribution
of $\rho ^{n} n^{a/\lambda }X^{-1}(n)$ converges to an exponential law.
\end{cor}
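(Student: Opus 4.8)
The plan is to specialize the abstract weak limit theorem for extremes of regenerative processes from \cite[Theorem~2]{ok_ik} to the birth and death process at hand, feeding in the sharp tail asymptotics \eqref{f53} and the mean regeneration period obtained from \eqref{f134}. For a process of this type that theorem yields an exponential law of the form
\begin{equation*}
\mathbf{P}(\bar{X}(t)\leq u)\longrightarrow e^{-\tau}\qquad\text{whenever}\qquad \frac{t}{\alpha_T}\,q(u)\longrightarrow\tau,
\end{equation*}
where $q(u)=\mathbf{P}(\bar{X}(T_1)>u)$; equivalently, the first passage time $X^{-1}(n)$, rescaled by its natural order $\alpha_T/q(n-1)$, is asymptotically standard exponential. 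First I would assemble the two constants this requires: from \eqref{f134} and the cycle-mean identity $\mathbf{E}T_k=1/((\lambda_0+\mu_0)p_0)=1/(ap_0)$ one reads off $\alpha_T=1/(ap_0)$, while \eqref{f53} gives, after the lattice shift $u=n-1$,
\begin{equation*}
q(n-1)=\frac{1/\rho-1}{C}\,\rho^{n}n^{a/\lambda}(1+o(1)),\quad n\to\infty.
\end{equation*}

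Applying the theorem with $u=n-1$, the exponent steering the limit simplifies, all factors $\rho^{\pm n}$, $n^{\pm a/\lambda}$, $C^{\pm1}$ and $(1/\rho-1)^{\pm1}$ cancelling:
\begin{equation*}
\frac{t_n^{*}}{\alpha_T}\,q(n-1)=ap_0\,t_n^{*}\,q(n-1)=ap_0x\,(1+o(1))\longrightarrow ap_0x.
\end{equation*}
Hence $\mathbf{P}(\bar{X}(t_n^{*})\geq n)=1-\mathbf{P}(\bar{X}(t_n^{*})\leq n-1)\to1-e^{-ap_0x}$, which is the first assertion. For the hitting-time formulation I would use that a birth and death process started at $0$ moves by unit steps and so cannot overshoot a level without first attaining it, whence $\{\bar{X}(t)\geq n\}=\{X^{-1}(n)\leq t\}$ up to a null event. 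Rewriting the definition of $t_n^{*}$ as $C^{-1}(1/\rho-1)\rho^{n}n^{a/\lambda}t_n^{*}=x$ and putting $t=t_n^{*}$ turns $\{X^{-1}(n)\leq t_n^{*}\}$ into $\{C^{-1}(1/\rho-1)\rho^{n}n^{a/\lambda}X^{-1}(n)\leq x\}$, so the two displayed limits express the same fact.

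The computations are routine once this coupling is in place, and the only point demanding genuine care is the lattice shift $u=n-1$. The rate of the limiting exponential is governed by $q(n-1)=\mathbf{P}(\bar{X}(T_1)\geq n)$, the probability that a single regeneration cycle reaches level $n$, and this differs from $q(n)$ by the factor $1/\rho$; inserting $q(n)$ in its place would deliver the wrong constant $ap_0\rho x$. Matching the normalizing time $t_n^{*}$ to $q(n-1)$ is exactly what puts the immigration-induced polynomial correction $n^{a/\lambda}$ and the constant $C$ of \eqref{f140} in their correct positions. It remains only to note that the hypotheses of \cite[Theorem~2]{ok_ik} are met here, which was already verified while establishing Corollary~\ref{n3}.
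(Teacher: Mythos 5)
Your proposal is correct and follows exactly the route the paper indicates: Corollary \ref{n4} is stated there without proof, with only the remark that it follows from \eqref{f134}, \eqref{f53} and Theorem~2 of \cite{ok_ik}, and your argument assembles precisely these ingredients --- $\alpha _{T}=1/(ap_{0})$ from the cycle-mean identity, the tail asymptotics \eqref{f53} for $q(n)$, and the cycle-maximum weak limit theorem. Your handling of the lattice shift (using $q(n-1)=\mathbf{P}(\bar{X}(T_{1})\geq n)$ rather than $q(n)$, which would produce the wrong constant $ap_{0}\rho x$) and of the duality $\{\bar{X}(t)\geq n\}=\{X^{-1}(n)<t\}$ correctly supplies the details the paper leaves to the reader.
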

\end{example}


\begin{acknowledgement}[title={Acknowledgments}]
We thank anonymous referees for a number of useful suggestions and remarks.
\end{acknowledgement}

\bibliographystyle{vmsta-mathphys}


\end{document}